\documentclass[reqno,a4paper,11pt]{amsart}
\usepackage{amssymb}
\usepackage[dvips, lmargin=3.5cm, rmargin=3.5cm, tmargin=4cm, bmargin=4cm]{geometry}
\usepackage[colorlinks=true, citecolor=blue, linkcolor=blue, urlcolor=blue]{hyperref}

\usepackage{xfrac}
\usepackage{graphicx}
\usepackage{hyperref}
\usepackage{amsmath}
\usepackage[T1]{fontenc}
\usepackage{charter}
\usepackage{enumitem}
\usepackage{dsfont}
\everymath{\displaystyle}
\newtheorem{thm}{Theorem}[section]
\newtheorem{defini}{Definition}[section]
\newtheorem{rem}{Remark}[section]
\newtheorem{lem}{Lemma}[section]
\newtheorem{prop}{Proposition}[section]
\newtheorem{coro}{Corollary}[section]
\newtheorem{ex}{Example}
\numberwithin{equation}{section}

\begin{document}
\title[The ergodic theory for $C_0$-semigroups and application of its local property.] {The ergodic theory for $C_0$-semigroups and application of its local property.} 
\author[]
{F. Barki \, and \, A. Tajmouati }

\subjclass[2010]{47A35, 47D60, 47A11}
\keywords{Ergodic theorem, $C_0$-semigroups, Local mean ergodic, Local Spectral, SVEP}

\maketitle
	\begin{center}
{Sidi Mohamed Ben Abdellah University}\\
	Faculty of Sciences Dhar El
	Mahraz. Fez, Morocco\\   
	Email: fatih.barki@usmba.ac.ma\,\,--\,\,abdelaziz.tajmouati@usmba.ac.ma
	
\end{center}
\vspace{0.6cm}
\quad \textbf{ Abstract}.
 Let $\{T(t)\}_{t\geqslant 0}$  be a $C_0$-semigroup of bounded linear operators on the Banach space ${X}$ into itself and  let  $A$ be their infinitesimal generator. In this paper, we show that if $T(t)$ is uniformly ergodic, then $A$ does not have the single valued extension property, which implies that $A$ must have a nonempty interior of the point spectrum.
Furthermore, we introduce the local mean ergodic for $C_0$-semigroup $T(t)$ at a vector $x\in X$ and we establish some conditions implying that $T(t)$ is a local mean ergodic at $x$.


\section{\textbf{Introduction} }

The semigroup can be used to solve a large class of problems commonly known as the Cauchy problem:
\begin{equation}\label{b}
u'(t)=Au(t), \,  t\geq 0, u(0)=u_0
\end{equation}
on  a Banach space $X$. Here $A$ is a given linear operator with domain $D(A)\subset X$ and the initial value $u_0$. The solution of (\ref{b}) will be given by $u(t)=T(t)u_0$ for an operator semigroup $\{T(t)\}_{t\geq0}$ on $X$. In this paper,  We will focus on a special class of linear
semigroups called $C_0$ semigroups which are semigroups of strongly continuous bounded linear operators. Precisely:

A one-parameter family $\{T(t)\}_{t\geqslant0}$ of bounded linear operators on  a Banach space $X$ is called strongly continuous semigroup ($C_0$-semigroup in short) \cite{Pa} if it has the following properties:
\begin{enumerate}
		\item $T(0)\, =\, I,$
	\item $T(t)T(s)\, =\, T(t+s),$
	\item The map $t\rightarrow T(t)x$ from $[0,+\infty[$ into $ {X}$ is continuous for all $x\in{X}$.
\end{enumerate}
In this case, its infinitesimal generator $A$ is defined by
$$Ax= \lim_{t\rightarrow 0^+}\frac{T(t)x-x}{t} \,  \mbox{ for all } x\in D(A),$$
with
$$D(A)=\{x\in  {X} :\,  \lim_{t\rightarrow 0^+}\frac{T(t)x-x}{t}\,   exists \, \}.$$
The Laplace transformation $\mathcal{R}(\lambda)=\int_{0}^{\infty}e^{-\lambda t}T(t)dt $
 of a $C_0$-semigroup $\{T(t)\}_{t\geqslant0}$  on $X$ is exactly the resolvent function of $A$ (see  \cite[Paragraph p.25]{Pa}), that's means  \begin{equation}
  R(\lambda, A)\, = \int_{0}^{\infty}e^{-\lambda t}T(t)dt,
 \end{equation}

\vspace{0.2cm}
	Let $\{T(t)\}_{t\geqslant0}$ be a $C_0$-semigroup of bounded linear operators in a Banach
	space $X$. Ergodic theorems \cite{Kr85} have a long tradition and are usually formulated via existence of the limits of the Ces{\`a}ro averages $C(t)$, defined as follows
\begin{equation}\label{Cesaro}
 C(t)\, :=\,  t^{-1}\int_{0}^{t}T(s)ds, \,  \mbox{ for } t\geq 0.  
\end{equation}
The $C_0$-semigroup $\{T(t)\}_{t\geqslant0}$ is said to be  mean (resp. uniformly) ergodic \cite{Kr85} if the Ces{\`a}ro averages $C(t)$ converges in the strong (resp. the norm) operator topology. This notion is completely connected to study the limit of the Abel averages $\mathcal{A}(\lambda)$ of $T(t)$, defined by
\begin{equation}
\mathcal{A}(\lambda)= \lambda \int_{0}^{\infty}e^{-\lambda t}T(t)dt , \mbox{ where } \lambda>0. 
\end{equation}
  Recall that, a $C_0$-semigroup $\{T(t)\}_{t\geqslant0}$  is called
   Abel ergodic if the limit of the Abel averages $ \mathcal{A}(\lambda)$, when $\lambda\to 0^+$, exists in the strong  operator topology.\\
  
Much of modern works has been focused on the study the connection  between   means ergodicity  and   Abel ergodicity for different class of semigroups.  E. Hille and R.S. Phillips in \cite{HP} deals with the uniform convergence of Abel averages $\mathcal{A}(\lambda)$ of semigroup of class $(A)$, a class slightly larger than $C_0$-semigroups, under the  assumption $\omega_0\leq 0$.  In \cite{Sh86}, S.Y. Shaw obtained for a locally integrated semigroup $\{T(t)\}_{t\geqslant0}$ when ${X}$ is over the complex field, under an assumption weaker than $\omega_0\leq 0$, that means $C(t)$ converges in norm if, and only if: $(i)$ the resolvent function $R(\lambda,A)$ exists for every $\lambda >0$, $(ii)$  $\|T(t)R(1,A)\|/t \to  0$ as $t\to \infty$ and $(iii)$ the Abel averages $\mathcal{A}(\lambda)$ converges in norm when $\lambda\to 0^+$. An interesting basic result needed in  this regard is  the uniform ergodic theorem of M. Lin in \cite{L2}, he treats the uniform ergodicity of a $C_0$-semigroup $\{T(t)\}_{t\geqslant0}$, under the assumption $\lim\limits_{t\to \infty}\|T(t)\|/t=0$.  
Further condition
have been obtained more recently by several authors \cite{KSZ,Su,taj}. 
	Around of the mean ergodicity,  there are many works done on different classes of semigroups defined on the Banach space ${X}$,  ~ see for instance ~ \cite{Kido,Mas,Sato, Sh-89}.\\


	This paper is organized as follows. In section 2, we give some definitions  and fundamental properties concerning the local spectral theory for a closed operator on a Banach space $ {X}$. In section 3, we shall employ local spectral theory to prove that a generator $A$ of a uniformly ergodic $C_0$-semigroup $\{T(t)\}_{t\geqslant0}$ does not have SVEP (Single Valued Extension Property),  which implies that $ ~ int\big(\sigma_p(A)\big) ~ \neq ~ \emptyset $. Furthermore, we study some conditions implying that a $C_0$-semigroup $\{T(t)\}_{t\geqslant0}$ locally satisfies the convergence of Ces{\`a}ro averages $C(t)$,    which means that $ T(t) $ is local mean ergodic at some $x\in X$. More precisely, we prove that if a generator $A$ of $C_0$-semigroup $\{T(t)\}_{t\geqslant0}$ has the SVEP and $0$ is a pole of the local resolvent function $\hat{x}_A$ of $A$ at $x$, then $T(t)$ is local mean ergodic at $x$.  

%
\section{\textbf{Preliminaries}} 

Throughout this paper, $\mathcal{B}(X)$ denotes the Banach algebra of all bounded linear operators on a complex Banach space ${X}$ into itself. Let $A$ be a closed linear operator on $ {X}$ with domain $D(A)\subset  {X}$, we denote by $N(A)$, $R(A)$, $\sigma(A)$, $\rho(A)$, $\sigma_{p}(A)$, $\sigma_{su}(A)$ and $R(.,A)$, the kernel, the range, the spectrum, the resolvent set, the point spectrum, the surjectivity spectrum and the resolvent operator of $A$, respectively. \vspace{0.2cm}

Recall that for  a closed linear operator $A$ and $x\in X$, the local resolvent of $A$ at $x$,  $\rho_{A}(x)$ defined as the union  of all  open subset $U$ of $\mathbb{C}$ for which there is an analytic function $f: U\rightarrow D(A)$ such that the equation $(A-\mu I)f(\mu)=x$ holds for
all $ \mu \in U$. The local spectrum $\sigma_A(x)$ of $A$
at $x$ is defined as $\sigma_A(x)=\mathbb{C}\setminus \rho_A(x)$.
Evidently, $\sigma_A(x)\subseteq\sigma_{su}(A)\subseteq\sigma(A)$, \,  $\rho_A(x)$ is open and $\sigma_A(x)$ is closed. 
\begin{lem}\cite{E, Mul}\label{l0}
	 Let $A$ be a closed  linear operator on a complex Banach space $X$. Then, 
	 $\mu\in\rho_{A}(x)$ if and only if there exists a sequence $(x_i)_{i\geq0}\subseteq D(A)$,  such that $(A-\mu)x_0=x$,  $(A-\mu)x_{i+1}=x_i$ and $\sup_{i\geq1}||x_i||^{\frac{1}{i}}<\infty$.
\end{lem}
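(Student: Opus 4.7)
\pr
The plan is to prove the equivalence by converting the analytic datum defining $\rho_A(x)$ into its Taylor coefficients at $\mu$ and, conversely, by reconstructing an analytic local resolvent function from the given sequence. The closedness of $A$ is what will allow us to move $A-\mu I$ through the relevant infinite sums.

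For the direction $(\Rightarrow)$, assume $\mu\in\rho_A(x)$, so there exist an open neighborhood $U$ of $\mu$ and an analytic function $f:U\to D(A)$ (viewed as an $X$-valued analytic function whose values lie in $D(A)$) with $(A-\nu I)f(\nu)=x$ for every $\nu\in U$. I would expand $f$ in a Taylor series
\[
f(\nu)=\sum_{i=0}^\infty (\nu-\mu)^i\,x_i,\qquad x_i:=\frac{f^{(i)}(\mu)}{i!}\in X,
\]
valid on some disk $|\nu-\mu|<r$. Substituting into $(A-\nu I)f(\nu)=x$ and rewriting $(A-\nu I)=(A-\mu I)-(\nu-\mu)I$, I would equate coefficients of $(\nu-\mu)^i$ to obtain $(A-\mu)x_0=x$ and $(A-\mu)x_{i+1}=x_i$ for $i\geq 0$; in particular each $x_i$ belongs to $D(A)$. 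The Cauchy--Hadamard formula then gives $\limsup_i\|x_i\|^{1/i}\leq 1/r$, which yields $\sup_{i\geq 1}\|x_i\|^{1/i}<\infty$.

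For the direction $(\Leftarrow)$, suppose $(x_i)_{i\geq 0}\subseteq D(A)$ satisfies the three conditions, and set $M:=\sup_{i\geq 1}\|x_i\|^{1/i}$. Choose any $r$ with $0<r<1/M$ and define on the disk $U:=\{\nu:|\nu-\mu|<r\}$ the series
\[
f(\nu):=\sum_{i=0}^\infty (\nu-\mu)^i\,x_i,
\]
which converges normally in $U$ and therefore defines an $X$-valued analytic function on $U$. To show $f(\nu)\in D(A)$ and $(A-\nu I)f(\nu)=x$, I would apply $A-\mu I$ to the partial sum $S_n(\nu):=\sum_{i=0}^n(\nu-\mu)^i x_i$ term by term, using the recursion to obtain $(A-\mu)S_n(\nu)=x+(\nu-\mu)S_{n-1}(\nu)$. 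As $n\to\infty$, $S_n(\nu)\to f(\nu)$ and $(A-\mu)S_n(\nu)\to x+(\nu-\mu)f(\nu)$; the closedness of $A$ then forces $f(\nu)\in D(A)$ and $(A-\mu)f(\nu)=x+(\nu-\mu)f(\nu)$, which is exactly $(A-\nu I)f(\nu)=x$. Hence $U\subseteq\rho_A(x)$, so $\mu\in\rho_A(x)$.

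I expect the only delicate point to be ensuring that each Taylor coefficient $x_i$ of $f$ in the first direction actually lies in $D(A)$; this requires viewing $f$ as analytic into $X$ and using the closedness of $A$ together with the identity obtained after differentiating $(A-\nu I)f(\nu)=x$ (or, equivalently, matching Taylor coefficients as above), rather than trying to equip $D(A)$ with an auxiliary topology. Once this is handled, the rest is a routine power-series manipulation combined with the closed graph argument used above.
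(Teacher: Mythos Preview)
The paper does not supply its own proof of this lemma: it is stated with a citation to \cite{E,Mul} and used as a black box. So there is nothing in the paper to compare your argument against line by line.

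That said, your proof is correct and is exactly the standard argument one finds in the cited sources: pass from the analytic local resolvent function to its Taylor coefficients at $\mu$ (giving the recursion and the root bound via Cauchy--Hadamard), and conversely rebuild the analytic solution as a power series whose radius of convergence is controlled by $\sup_i\|x_i\|^{1/i}$, using closedness of $A$ to push $A-\mu I$ through the limit of partial sums. Your identification of the only genuinely delicate step --- showing that each Taylor coefficient $x_i$ lies in $D(A)$ --- is on point; the clean way to finish it is to note that $Af(\nu)=x+\nu f(\nu)$ is itself analytic, represent $x_i$ by the Cauchy integral $x_i=\frac{1}{2\pi i}\int_\gamma \frac{f(\nu)}{(\nu-\mu)^{i+1}}\,d\nu$, and invoke the standard fact that a closed operator commutes with such an integral when the image is integrable, yielding $x_i\in D(A)$ and $Ax_i=\frac{1}{2\pi i}\int_\gamma \frac{Af(\nu)}{(\nu-\mu)^{i+1}}\,d\nu$. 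With that in hand your coefficient comparison is fully justified. The partial-sum computation in the converse direction is clean and correct; you may want to remark that the case $M=0$ simply gives an entire $f$, so the choice of $r$ is unrestricted there.
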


Next, let $A$ be a closed  linear operator on $X$, $A$ is said to have the single
valued extension property at $\lambda_{0}\in\mathbb{C}$ (SVEP  for brevity \cite{Mb}) if
for every open disk $D_{\lambda_0}\subseteq\mathbb{C}$ centred at
$\lambda_{0}$, the only  analytic function  $f: D_{\lambda_0}\rightarrow D(A)$ which satisfies
the equation $(zI-A)f(z)=0$ for all $z\in D_{\lambda_0}$, is the function $f\equiv 0$. Moreover, $A$ is said to have the SVEP if $A$ has the SVEP at every $\lambda\in\mathbb{C}$. Denote by \begin{equation}\label{S}
S(A)=\{\lambda\in \mathbb{C}: A\mbox{ has not  the SVEP at } \lambda\}.
\end{equation} 
Clearly, $A$ has the SVEP if and only if $S(A)=\emptyset$. Furthermore,  we have the following characterization: $\mu\in S(A)$ if and only if there exists a sequence $(x_i)_{i\geq0}\subseteq D(A)$ not all of them equal to zero such that $x_0=0$,
$\, (A-\mu)x_{i+1}=x_i$ and  $\sup_{i\geq1}||x_i||^{\frac{1}{i}}<\infty$. From definition, we can see that every operator without SVEP has a nonempty open set consisting of eigenvalues. So, if the point spectrum of an operator has empty interior, then the operator has automatically SVEP. If $A$  has the SVEP,
then for every $x\in X$ there exists a unique maximal analytic function $\hat{x}_A$, called
the local resolvent function of $A$ at $x$, defined on $\rho_A(x)$, which satisfies $(\lambda I -A)\hat{x}_A(\lambda)=x$, everywhere.
Moreover, if $A$ has the SVEP, then we have the following equivalence:
$$ \sigma_A(x) =\emptyset \, \Longleftrightarrow \, x=0.$$  
It is worth noticing that according to \cite[Remark 2.4 (d)]{PA} the following implication hold:
$$ int\big(\sigma_{p}(A)\big)=\emptyset  \, \implies A \mbox{ has the SVEP }.$$

The local spectral radius $r_A(x)$ of $A$ at $x\in X$, is defined by 
$$ r_A(x):= max \{ |z|: z\in \sigma_A(x)\}.$$
If $A$ has the SVEP, then the following
local spectral radius formula hold: 
 $$r_A(x) =\limsup_{n\to \infty}\|A^n(x)\|^{\frac{1}{n}}.$$ 
A complete study of basic notions of local spectral theory can be found in \cite{PA,E,Mb}.\\ 

Let $A$ be a closed linear operator with domain $D(A)\subset X$, the analytic core $K(A)$ and the quasi nilpotent part $H_0(A)$ of $A$, are defined respectively by
\begin{center}
	$K(A) := \big \{x \in X : \,  \exists (x_n)_{n\geq0} \, \subset  \, D(A) \, \mbox{ and } \, \delta > 0 \, \mbox{ such that } x_{0}=x, \newline A x_n = x_{n-1} \,\,\,\,\forall n \geq1 \mbox{ and } \|x_{n}\| \leq \delta^{n}\|x\| \big\}.$
\end{center}
\begin{center}
	$H_0(A) := \big \{x \in D^{\infty}(A) : \,   \lim\limits_{n\to \infty }\|A^nx\|^{\frac{1}{n}}  =0   \big\},$ \, with  $D^{\infty}(A)= \cap_{n\geq1} D(A^n)$.
\end{center}
Both spaces were thoroughly studied by \textsc{M. Mbekhta} in \cite{Mb1} and \cite{Mb}. The next proposition gives  a local spectral characterization for the analytic core $K(A)$ and the quasi nilpotent part $H_0(A)$ of $A$.
\begin{prop}\cite[ Proposition 1.3]{Mb}
	Let $A$ be a closed linear operator on the Banach space $X$. Then, we have the following:
\begin{enumerate}
	\item $K(A)=\big\{x\in X: \, 0\in \rho_{A}(x)\big \}.$
	\item $H_0(A) \subseteq\big\{x\in X: \, \sigma_{A}(x) \subseteq \{0\} \big\} $\,  $($we get the equality when $A$ has the SVEP$)$.
\end{enumerate}
\end{prop}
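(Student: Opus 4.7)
The plan is to prove both statements by converting between sequence-with-geometric-growth data and analytic local resolvent data, using Neumann-type power series in both directions.

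For part (1), given $x \in K(A)$ with sequence $(x_n)_{n\geq 0} \subset D(A)$ satisfying $x_0 = x$, $Ax_n = x_{n-1}$ for $n\geq 1$, and $\|x_n\| \leq \delta^n\|x\|$, I would define
\[
f(\mu) \,:=\, \sum_{n=1}^{\infty}\mu^{n-1}x_n \qquad \text{for } |\mu| < \delta^{-1}.
\]
The geometric bound gives absolute convergence; the companion series $\sum \mu^{n-1}Ax_n = \sum \mu^{n-1}x_{n-1}$ also converges, so closedness of $A$ places $f(\mu)$ in $D(A)$ with $Af(\mu) = x + \mu f(\mu)$, i.e.\ $(A-\mu I)f(\mu) = x$, giving $0 \in \rho_A(x)$. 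Conversely, if $0 \in \rho_A(x)$, Lemma \ref{l0} supplies $(y_i) \subset D(A)$ with $Ay_0 = x$, $Ay_{i+1}=y_i$ and $\sup_i\|y_i\|^{1/i} < \infty$; re-indexing $x_0 := x$, $x_n := y_{n-1}$ and enlarging the constant converts the root bound into $\|x_n\| \leq \delta^n\|x\|$, exhibiting $x \in K(A)$.

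For the inclusion $H_0(A) \subseteq \{x:\sigma_A(x)\subseteq\{0\}\}$ in part (2), given $x \in H_0(A)$ and $\mu \neq 0$, I would define the ``resolvent-at-infinity'' expansion
\[
f(\mu) \,:=\, -\sum_{n=0}^{\infty}\mu^{-(n+1)}A^n x.
\]
The hypothesis $\|A^n x\|^{1/n} \to 0$ produces convergence uniformly on compacta of $\mathbb{C}\setminus\{0\}$, so $f$ is analytic there; applying $A$ termwise via closedness and telescoping give $(A-\mu I)f(\mu) = x$, so $\mu \in \rho_A(x)$.

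The reverse inclusion under SVEP is the delicate step. Given $\sigma_A(x) \subseteq \{0\}$, the unique local resolvent $\hat{x}_A$ is analytic on $\mathbb{C}\setminus\{0\}$ with $(\lambda I - A)\hat{x}_A(\lambda) = x$. I would Laurent-expand $\hat{x}_A(\lambda) = \sum_{n\in\mathbb{Z}}c_n\lambda^n$ around $0$; closedness places each $c_n$ in $D(A)$, and matching coefficients yields $Ac_n = c_{n-1}$ for $n \neq 0$ and $c_{-1} - Ac_0 = x$. Once the holomorphic part $\sum_{n\geq 0}c_n\lambda^n$ is shown to vanish, one reads off $c_{-m} = A^{m-1}x$, simultaneously securing $x \in D^{\infty}(A)$ and, from convergence of the resulting Laurent series on all of $\mathbb{C}\setminus\{0\}$, giving $\limsup\|A^n x\|^{1/n} = 0$. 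An economical alternative is to invoke the local spectral radius formula $r_A(x) = \limsup\|A^n x\|^{1/n}$ recalled in the preliminaries: $\sigma_A(x) \subseteq \{0\}$ forces $r_A(x) = 0$, and the conclusion follows. The main obstacle is controlling the holomorphic part of the Laurent expansion: for bounded $A$ it vanishes immediately because $R(\lambda,A)x = O(|\lambda|^{-1})$ at infinity, but for a general closed $A$ one must exploit the nonemptiness of $\rho(A)$ (automatic for semigroup generators) together with the identification $\hat{x}_A(\lambda) = R(\lambda,A)x$ on $\rho_A(x)\cap\rho(A)$ to force the holomorphic part to be trivial via a Liouville-type decay argument at infinity.
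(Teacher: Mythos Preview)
The paper does not prove this proposition; it is quoted verbatim from Mbekhta \cite{Mb} without argument, so there is no in-paper proof to compare against. Your sketch follows the standard route and is essentially correct for part~(1) and for the inclusion $H_0(A)\subseteq\{x:\sigma_A(x)\subseteq\{0\}\}$.

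On the ``main obstacle'' you flag for the reverse inclusion under SVEP: you do not need $\rho(A)\neq\emptyset$ or any decay of $\hat{x}_A$ at infinity. With your Laurent coefficients, the entire regular part $g(\lambda)=\sum_{n\ge 0}c_n\lambda^n$ satisfies $(\lambda I-A)g(\lambda)=x-c_{-1}$ for every $\lambda\in\mathbb{C}$ (apply $A$ termwise using closedness and your relations $Ac_n=c_{n-1}$ for $n\ge 1$, $Ac_0=c_{-1}-x$). Hence $\sigma_A(x-c_{-1})=\emptyset$, and SVEP forces $x=c_{-1}$. Then $(\lambda I-A)g(\lambda)\equiv 0$, and SVEP again gives $g\equiv 0$. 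From there your argument runs: $c_{-m}=A^{m-1}x$, so $x\in D^\infty(A)$, and convergence of the principal part on all of $\mathbb{C}\setminus\{0\}$ yields $\limsup\|A^nx\|^{1/n}=0$. This closes the gap for arbitrary closed $A$.

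Your ``economical alternative'' via the local spectral radius formula should be handled with care: for unbounded $A$ that formula presupposes $x\in D^\infty(A)$, which is part of what you are trying to establish, so invoking it directly risks circularity.
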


\vspace{0.2cm}

We recall a characterization of the poles of the local resolvent function obtained in the monograph \cite{Bur}. 
\begin{prop}\label{p1}\cite[Proposition 3.1]{Bur} Assume that $T \in \mathcal{B}( X )$ has the SVEP and let $x \in X$. Then, $\sigma_T \big((\alpha-T)^nx\big)\neq \sigma_T(x)$ if and only if $\alpha$ is a pole of the local resolvent function $\hat{x}_T$ of order less than or equal to $n$.
\end{prop}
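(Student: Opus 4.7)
My plan is to prove both directions directly from the Laurent expansion of $\hat{x}_T$ together with a polynomial-division identity, using SVEP only to guarantee uniqueness of the analytic extensions involved.

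\emph{Sufficiency.} Assume $\alpha$ is a pole of $\hat{x}_T$ of order $m\le n$, so that the local resolvent function admits a Laurent expansion $\hat{x}_T(\lambda)=\sum_{k\ge -m}a_k(\lambda-\alpha)^k$ on a punctured disc about $\alpha$. Plugging this into $(\lambda-T)\hat{x}_T(\lambda)=x$ and splitting $\lambda-T=(\lambda-\alpha)+(\alpha-T)$, coefficient comparison gives the recursion $a_{j-1}=(T-\alpha)a_j$ for $j\ne 0$, with the normalization coming from the $j=0$ equation. Starting from $a_{-m-1}=0$, induction yields $a_{-m+k}\in\ker\bigl((T-\alpha)^{k+1}\bigr)$ for $0\le k\le m-1$; since $m\le n$, each negative-index coefficient is annihilated by $(\alpha-T)^n$. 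Consequently $(\alpha-T)^n\hat{x}_T(\lambda)$ is analytic at $\alpha$ and satisfies
\[
(\lambda-T)\bigl[(\alpha-T)^n\hat{x}_T(\lambda)\bigr]=(\alpha-T)^n x.
\]
This shows $\alpha\in\rho_T\bigl((\alpha-T)^n x\bigr)$, whereas $\alpha\in\sigma_T(x)$ because a genuine pole of $\hat{x}_T$ cannot lie in $\rho_T(x)$. Hence the two local spectra differ.

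\emph{Necessity.} The inclusion $\sigma_T((\alpha-T)^n x)\subseteq\sigma_T(x)$ is immediate, since if $(\lambda-T)\hat{x}_T(\lambda)=x$ then $(\alpha-T)^n\hat{x}_T$ is a local resolvent for $(\alpha-T)^n x$. The key ingredient for the converse will be the polynomial identity
\[
(\alpha-T)^n-(\alpha-\lambda)^n=(\lambda-T)\,q(\lambda,T),\qquad q(\lambda,T):=\sum_{k=0}^{n-1}(\alpha-T)^{n-1-k}(\alpha-\lambda)^k.
\]
Given any $\mu\in\rho_T\bigl((\alpha-T)^n x\bigr)$ with an analytic $g$ satisfying $(\lambda-T)g(\lambda)=(\alpha-T)^n x$ on an open $V\ni\mu$, the function $h(\lambda):=g(\lambda)-q(\lambda,T)x$ is analytic on $V$ and obeys $(\lambda-T)h(\lambda)=(\alpha-\lambda)^n x$. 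Therefore $f(\lambda):=h(\lambda)/(\alpha-\lambda)^n$ is an analytic local inverse of $(\lambda-T)$ against $x$ on $V\setminus\{\alpha\}$, proving $\mu\in\rho_T(x)$ whenever $\mu\ne\alpha$. Combined with $\sigma_T(x)\ne\sigma_T((\alpha-T)^n x)$, this forces $\alpha\in\sigma_T(x)\setminus\sigma_T((\alpha-T)^n x)$. Choosing $V$ to be a neighborhood of $\alpha$ and invoking SVEP, the formula $\hat{x}_T(\lambda)=h(\lambda)/(\alpha-\lambda)^n$ is valid on $V\cap\rho_T(x)$, exhibiting $\alpha$ as a pole of $\hat{x}_T$ of order at most $n$.

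\emph{Main difficulty.} The most delicate point is the kernel-chain bookkeeping in the sufficiency part: tracking exactly which power of $(\alpha-T)$ annihilates each Laurent coefficient and concluding that the single operator $(\alpha-T)^n$ kills the entire principal part once $n\ge m$. The polynomial-division step in the necessity part is conceptually clean, but care is needed when invoking SVEP to identify the constructed meromorphic function $h/(\alpha-\lambda)^n$ with $\hat{x}_T$ itself, rather than with some other analytic extension.
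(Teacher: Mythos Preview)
The paper does not actually prove this proposition: it is quoted verbatim as \cite[Proposition 3.1]{Bur} in the preliminaries section, with no proof supplied. So there is no ``paper's own proof'' to compare against.

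That said, your argument is correct and is in fact the standard route to this result (and is essentially the one used in \cite{Bur}). The sufficiency direction is handled by the Laurent recursion $a_{j-1}=(T-\alpha)a_j$, which forces each principal-part coefficient $a_{-m+k}$ into $\ker\bigl((T-\alpha)^{k+1}\bigr)$ and hence into $\ker\bigl((\alpha-T)^n\bigr)$ once $n\ge m$; this makes $(\alpha-T)^n\hat{x}_T$ analytic at $\alpha$ and puts $\alpha$ into $\rho_T\bigl((\alpha-T)^n x\bigr)\setminus\rho_T(x)$. The necessity direction via the factorisation $(\alpha-T)^n-(\alpha-\lambda)^n=(\lambda-T)q(\lambda,T)$ is likewise standard: it shows that the two local spectra can differ only at the single point $\alpha$, and the quotient $h(\lambda)/(\alpha-\lambda)^n$ then exhibits the pole structure of $\hat{x}_T$ near $\alpha$, with SVEP needed precisely to identify this quotient with $\hat{x}_T$ on the overlap. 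One small clarification worth adding explicitly: in the last step you should note that since $\alpha\in\sigma_T(x)$ the singularity of $h(\lambda)/(\alpha-\lambda)^n$ at $\alpha$ is not removable, so the pole order is genuinely between $1$ and $n$ (you implicitly use this but do not state it).
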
 
\vspace{0.2cm}

 The following results have been proven by \textsc{T. Berm{\'u}dez} and al in \cite{Bur}. In fact, we can easily check that these results are also satisfactory when we choose $A$ is a closed linear operator with domain  $D(A) \subsetneq X$.

 \begin{coro}\label{c1}\cite[Corollary 3.2]{Bur}
 Assume that $T \in \mathcal{B}(X)$ has the SVEP and let $x \in X$ and let $\hat{x}_T$ be the local resolvent function of $T$ at $x$. Then, the following assertions hold:
 \begin{enumerate}
 	\item If $\hat{x}_T$ has a pole of finite order at $\alpha$, then $\alpha \in \sigma_p(T)$.
 	\item The local resolvent function $\hat{x}_T$ at $x$ has a pole of order $n$ at $\alpha$ if and only if $\alpha\in \sigma_T\big((\alpha-T)^{n-1}x\big)\backslash \sigma_T\big((\alpha-T)^{n}x\big)$.
 	\item If $\lambda \in \rho_T(x)$ and $y=\hat{x}_T(\lambda)$, then $\hat{x}_T$ has a pole of order $n$ at $\alpha$ if and only if $\hat{y}_T$ dose. 
 \end{enumerate}
 \end{coro}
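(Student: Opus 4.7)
The plan is to handle the three assertions in turn, using in each case the Laurent expansion $\hat{x}_T(\lambda) = \sum_{k \geq -n} c_k (\lambda-\alpha)^k$ near a pole of order $n$ together with the defining identity $(\lambda I - T)\hat{x}_T(\lambda) = x$. Rewriting $\lambda I - T = (\lambda-\alpha)I + (\alpha I - T)$ and comparing coefficients in the two series yields $(\alpha I - T)c_{-n} = 0$, the recurrence $c_{-j} = (T-\alpha)^{j-1}c_{-1}$ for $j=1,\ldots,n$, and the annihilation relation $(T-\alpha)^n c_{-1} = 0$. From the first identity, $c_{-n}$ is a nonzero vector in $N(\alpha I - T)$, which already gives part (1).

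For part (2), I would exploit the fact that, by uniqueness of local resolvents under the SVEP, $(\alpha-T)^k \hat{x}_T(\mu)$ coincides with the local resolvent function of $(\alpha-T)^k x$ wherever the former is defined. Applying $(\alpha-T)^k$ term by term to the Laurent expansion and using the recurrences, one shows that for $k=n$ the entire principal part vanishes, while for $k=n-1$ only a simple pole with residue $\pm c_{-n}\neq 0$ survives; this delivers the forward direction $\alpha\in\sigma_T((\alpha-T)^{n-1}x)\setminus\sigma_T((\alpha-T)^n x)$. For the converse, Proposition \ref{p1} applied with exponent $n$ produces a pole of order at most $n$ as soon as one knows $\sigma_T((\alpha-T)^n x)\neq \sigma_T(x)$, and to pass from $\alpha\notin\sigma_T((\alpha-T)^n x)$ to this strict inequality I need the auxiliary inclusion $\sigma_T(x)\setminus\{\alpha\}\subseteq \sigma_T((\alpha-T)^k x)$. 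I would establish it by taking a local resolvent $g$ of $(\alpha-T)^k x$ near a point $\mu_0\neq\alpha$ and verifying, via the binomial identity $(\alpha-T)^k=((\alpha-\mu)I+(\mu I-T))^k$, that
\[
f(\mu) := (\alpha-\mu)^{-k}\Big(g(\mu) - \sum_{j=1}^{k}\binom{k}{j}(\alpha-\mu)^{k-j}(\mu I -T)^{j-1}x\Big)
\]
is an analytic local resolvent of $x$ near $\mu_0$. The hypothesis $\alpha\in\sigma_T((\alpha-T)^{n-1}x)$, combined with Proposition \ref{p1} applied with exponent $n-1$, then excludes order at most $n-1$ and pins the order at exactly $n$.

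For part (3), the key step is to derive the identity $\hat{x}_T(\mu) = y + (\lambda-\mu)\hat{y}_T(\mu)$. Applying $\lambda I - T$ to $(\mu I - T)\hat{y}_T(\mu) = y$ and using the commutativity of $\mu I - T$ and $\lambda I - T$ (valid since $T\in\mathcal{B}(X)$) gives $(\mu I - T)(\lambda I - T)\hat{y}_T(\mu) = (\lambda I - T)y = x$, so by uniqueness of local resolvents one has $\hat{x}_T(\mu) = (\lambda I - T)\hat{y}_T(\mu) = (\lambda-\mu)\hat{y}_T(\mu) + y$. Since $\alpha\in\sigma_T(x)$ and $\lambda\in\rho_T(x)$ force $\alpha\neq\lambda$, the factor $\lambda-\mu$ is holomorphic and non-vanishing near $\alpha$, hence $\hat{x}_T$ and $\hat{y}_T$ have poles of identical order there.

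The step I expect to be the main obstacle is the converse half of (2), specifically the construction and verification that the explicit formula above genuinely produces an analytic local resolvent of $x$; every other assertion reduces to coefficient comparison in a Laurent expansion, whose bookkeeping is mechanical.
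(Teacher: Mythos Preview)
The paper does not give its own proof of this statement: Corollary~\ref{c1} is quoted directly from \cite[Corollary~3.2]{Bur}, with only the remark that the result carries over to closed operators with $D(A)\subsetneq X$. There is therefore no in-paper argument to compare your proposal against.

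That said, your outline is sound and follows the natural route one would expect in \cite{Bur}: Laurent expansion of $\hat{x}_T$ and coefficient matching give (1) and the forward half of (2); Proposition~\ref{p1}, together with your auxiliary inclusion $\sigma_T(x)\setminus\{\alpha\}\subseteq\sigma_T\big((\alpha-T)^kx\big)$ (and the standard reverse inclusion $\sigma_T\big((\alpha-T)^kx\big)\subseteq\sigma_T(x)$), pins the pole order exactly for the converse of (2); and the identity $\hat{x}_T(\mu)=y+(\lambda-\mu)\hat{y}_T(\mu)$ settles (3). One small point worth tightening in (3): you should record that $\sigma_T(y)=\sigma_T(x)$, which your identity yields once one reads it in both directions and observes that the apparent singularity of $(\lambda-\mu)^{-1}\big(\hat{x}_T(\mu)-\hat{x}_T(\lambda)\big)$ at $\mu=\lambda$ is removable. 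This ensures $\alpha\neq\lambda$ is available for both implications of the equivalence, not only the one starting from a pole of $\hat{x}_T$.
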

\begin{lem}\label{lem}\cite[Theorem 3.3]{Bur}
	Assume that $T \in \mathcal{B}(X)$ has the SVEP and let $x \in X$ and let $\hat{x}_T$ be the local resolvent function of $T$ at $x$. Then, the following assertions are equivalent: 

	$(i)$\,  $\alpha $ is a pole of $\hat{x}_T$ of order $n$.
		
	$(ii)$\, There exists a unique decomposition \, $x=y+z$, \,  such that  $y$  belongs to \, $N(\alpha-T)^n\backslash N(\alpha-T)^{n-1}$ \, and $\sigma_T(z)=\sigma_T(x)\backslash \{\alpha\}$. 	
\end{lem}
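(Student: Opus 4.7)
My plan is to prove the equivalence by exploiting the Laurent expansion of $\hat{x}_T$ around $\alpha$ and the analytic core/kernel structure it forces on $x$. Throughout I would use that, since $T$ has SVEP, the local resolvent function is single-valued and the implication $\sigma_T(w)=\emptyset\Rightarrow w=0$ holds; this gives me a clean tool for uniqueness.

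For $(i)\Rightarrow(ii)$, I would write, in a punctured neighbourhood of $\alpha$,
\[
\hat{x}_T(\lambda)=\sum_{k=-n}^{-1}(\lambda-\alpha)^k y_k+g(\lambda),\qquad y_{-n}\neq 0,
\]
with $g$ analytic at $\alpha$. Plugging this into $(\lambda-T)\hat{x}_T(\lambda)=x$ and comparing coefficients of $(\lambda-\alpha)^m$ yields the recurrence $(\alpha-T)y_m=-y_{m-1}$ for $m\neq 0$, together with $y_{-1}+(\alpha-T)y_0=x$. Setting $y:=y_{-1}$ and $z:=x-y=(\alpha-T)y_0$, iterating the recurrence gives $(\alpha-T)^k y_{-1}=(-1)^k y_{-1-k}$, so $(\alpha-T)^{n-1}y=\pm y_{-n}\neq 0$ while $(\alpha-T)^n y=0$; hence $y\in N(\alpha-T)^n\setminus N(\alpha-T)^{n-1}$. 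For any vector in $N(\alpha-T)^n$ a direct finite-sum formula
\[
\hat{y}_T(\lambda)=\sum_{k=0}^{n-1}(\lambda-\alpha)^{-k-1}(T-\alpha)^k y
\]
can be checked to satisfy $(\lambda-T)\hat{y}_T(\lambda)=y$ for $\lambda\neq\alpha$, so $\sigma_T(y)\subseteq\{\alpha\}$; moreover the above formula coincides, termwise, with the principal part $\sum_{k=-n}^{-1}(\lambda-\alpha)^k y_k$ of $\hat{x}_T$. Consequently $\hat{x}_T-\hat{y}_T=g$ extends analytically across $\alpha$ and serves as $\hat{z}_T$ there, giving $\alpha\in\rho_T(z)$ and thus $\sigma_T(z)\subseteq\sigma_T(x)\setminus\{\alpha\}$. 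The reverse inclusion follows from $x=y+z$ with $\sigma_T(y)\subseteq\{\alpha\}$.

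For $(ii)\Rightarrow(i)$, the decomposition directly produces the expansion: $\hat{y}_T$ is explicitly a Laurent polynomial with pole of order exactly $n$ at $\alpha$ (the top coefficient being $(T-\alpha)^{n-1}y\neq 0$ by the hypothesis $y\notin N(\alpha-T)^{n-1}$), whereas $\hat{z}_T$ is analytic at $\alpha$ because $\alpha\in\rho_T(z)$. Summing them on $\rho_T(x)\cap(\mathbb{C}\setminus\{\alpha\})$ and invoking SVEP to identify the result with $\hat{x}_T$ shows $\hat{x}_T$ has a pole of order $n$ at $\alpha$. Uniqueness of the decomposition is a standard SVEP argument: if $x=y_1+z_1=y_2+z_2$, then $y_1-y_2=z_2-z_1$ has local spectrum contained in both $\{\alpha\}$ and $\sigma_T(x)\setminus\{\alpha\}$, hence empty, hence zero.

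The main obstacle is the bookkeeping in the first implication: one must verify cleanly that the coefficients $y_k$ produced by the Laurent series automatically satisfy the recurrence forcing $y_{-1}\in N(\alpha-T)^n\setminus N(\alpha-T)^{n-1}$, and then recognize the principal part of $\hat{x}_T$ as the local resolvent of $y_{-1}$ so that the remainder $g$ can play the role of $\hat{z}_T$ in a neighbourhood of $\alpha$. Once this identification is secured, the rest is bookkeeping with the SVEP-based cancellation principle $\sigma_T(w)=\emptyset\Rightarrow w=0$.
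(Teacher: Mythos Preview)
The paper does not prove this lemma at all: it is quoted verbatim as \cite[Theorem~3.3]{Bur} and used as a black box, so there is no in-paper argument to compare against. Your Laurent-expansion approach is correct and is in fact the standard route to this result (and essentially the one taken in the Berm\'udez--Gonz\'alez--Martin\'on reference): extract the principal part of $\hat{x}_T$ at $\alpha$, recognise it as the explicit finite-sum local resolvent of $y:=y_{-1}$, and let the analytic remainder serve as $\hat{z}_T$ near $\alpha$. The recurrence $(\alpha-T)y_m=-y_{m-1}$ you obtain from coefficient comparison does exactly what you claim, the telescoping verification of $(\lambda-T)\hat{y}_T(\lambda)=y$ is clean, and the uniqueness step via $\sigma_T(w)=\emptyset\Rightarrow w=0$ under SVEP is the right tool. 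One small point worth making explicit in $(ii)\Rightarrow(i)$: you should observe that $\alpha\in\rho_T(z)$ forces a punctured neighbourhood of $\alpha$ to lie in $\rho_T(y)\cap\rho_T(z)\subseteq\rho_T(x)$, so that $\alpha$ is genuinely an isolated singularity of $\hat{x}_T$ before you read off the pole order; this is implicit in what you wrote but deserves a sentence.
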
 

\begin{rem}
	Let $T\in \mathcal{B}(X)$ has the SVEP and $\lambda\in \mathds{C}$. It is well known $($see \cite{Bur}$)$ that if $\lambda$ is a pole of local resolvent function $\hat{x}_T$ for all $x\in X$ does not imply that $\lambda$ is a pole of the resolvent function of $T$. But, if we suppose further that $\{\lambda\} \subsetneq \sigma_{A}(x)$, the implication becomes true.
\end{rem}


Let $\{T(t)\}_{t\geqslant0}$ be a $C_0$-semigroup with infinitesimal generator $A$. It is known  that the $C_0$-semigroup is uniquely determined by its generator $A$, and we have the following properties (see \cite[Theorem 2.4]{Pa}):  
	\begin{enumerate}
\item  $A$ is closed and $\overline{D(A)}= {X}$.
\item  For all $x\in {X}$ and $t\geq 0$,
\begin{equation}\label{equ1}
\int_0^tT(s)xds\in D(A) \,\,\mbox{ and }\,\,  A\int_0^tT(s)xds=T(t)x-x.
\end{equation}
\item  For all $x\in D(A)$  and $t\geq 0$,

\begin{equation}\label{equ2} 
T(t)x\in D(A) \,  \mbox{ and } \,  T'(t)=AT(t)x=T(t)Ax. 
\end{equation}
\item For each $\lambda \in \rho(A)$,

 $(i)$ \,  $(\lambda I - A)R(\lambda,A)x = x$, \, for every $x \in {X}$.  

$(ii)$ \, $ R(\lambda,A) (\lambda I- A)x = x $, \, for every $x \in D(A)$.
\end{enumerate}
\begin{lem}\label{l1} \cite[Lemma 5.2]{Su} \label{ll}	Let $\{T(t)\}_{t\geqslant0}$ be a $C_0$-semigroup  of bounded linear operators  on a Banach space $ {X}$, and  let $ A$ be their infinitesimal generator.  Then, the following relations hold:
	
	$(1)$\,\, $R(A) = \big(\lambda R(\lambda,A)-I\big){X}$.
	
	$(2)$\,\, $ N(A) = \{x \in {X} :\,\,  \lambda R(\lambda,A)x = x \}=  {fix}\{T(t)\}$, \\
	where 	$ {fix}\{T(t)\} = \{x\in {X}:\,\,  T(t)x = x;\,\,\forall t \geq 0\}$ is the set of the fixed points of $T(t)$.
\end{lem}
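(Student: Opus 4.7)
The plan is to reduce both statements to the two resolvent identities $(\lambda I - A)R(\lambda,A) = I$ on $X$ and $R(\lambda,A)(\lambda I - A) = I$ on $D(A)$, combined with the fact that $R(\lambda,A)$ maps $X$ bijectively onto $D(A)$ and the generator property $\frac{d}{dt}T(t)x = T(t)Ax$ for $x \in D(A)$ listed as (\ref{equ2}).

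For (1), I would first rearrange $(\lambda I - A)R(\lambda,A) = I$ into the useful form $\lambda R(\lambda,A) - I = A R(\lambda,A)$, valid on all of $X$. Because $R(\lambda,A)$ is a bijection $X \to D(A)$, its range is exactly $D(A)$, so applying $A$ to both sides of this range identity gives $\bigl(\lambda R(\lambda,A) - I\bigr)X = A\,R(\lambda,A)X = A\bigl(D(A)\bigr) = R(A)$.

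For (2), I would treat the two equalities separately. For $N(A) = \mathrm{fix}\{T(t)\}$: if $T(t)x = x$ for all $t \geq 0$, then $(T(t)x - x)/t \equiv 0 \to 0$, so $x \in D(A)$ with $Ax = 0$. Conversely, given $x \in N(A) \subset D(A)$, the curve $t \mapsto T(t)x$ has derivative $T(t)Ax = 0$ by (\ref{equ2}), so $T(t)x \equiv T(0)x = x$. For the equality $N(A) = \{x : \lambda R(\lambda,A)x = x\}$: if $\lambda R(\lambda,A)x = x$, then $x$ lies in the range of $R(\lambda,A)$, hence in $D(A)$, and applying $(\lambda I - A)$ to both sides yields $(\lambda I - A)x = \lambda x$, that is $Ax = 0$. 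Conversely, if $Ax = 0$ with $x \in D(A)$, then $(\lambda I - A)x = \lambda x$; applying $R(\lambda,A)$ recovers $x = \lambda R(\lambda,A)x$.

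There is no real conceptual obstacle here; the only delicate point is bookkeeping on domains, namely making sure $A$ is only applied to elements of $D(A)$ and that the two one-sided identities $(\lambda I - A)R(\lambda,A) = I$ and $R(\lambda,A)(\lambda I - A) = I$ are used on their correct domains of definition ($X$ and $D(A)$, respectively).
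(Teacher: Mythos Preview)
Your argument is correct and is the standard one: for (1) you use the identity $\lambda R(\lambda,A)-I=AR(\lambda,A)$ on $X$ together with $R(\lambda,A)X=D(A)$, and for (2) you use the two one-sided resolvent identities and the differentiation formula (\ref{equ2}). The domain bookkeeping is handled properly.

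Note, however, that the paper does not give its own proof of this lemma at all; it simply quotes the result from \cite[Lemma~5.2]{Su}. So there is no in-paper argument to compare against. Your write-up supplies exactly the elementary proof one would expect and is self-contained, which is an improvement over a bare citation. One small point you might make explicit is that $\lambda$ is taken in $\rho(A)$ (the paper's statement leaves this implicit), since otherwise $R(\lambda,A)$ is not defined.
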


%
 \section{\textbf{Main results}}
 
In terms of the ergodic decomposition, we start the present section by the following lemma which we need in the sequel.

 \begin{lem}\label{l2}
 	Let $\{T(t)\}_{t\geqslant0}$ be a $C_0$-semigroup  of bounded linear operators on ${X}$ such that $\lim\limits_{t\to \infty}\frac{\|T(t)x\|}{t}=0$  for all $x\in  {X}$, and let $A$ be the infinitesimal generator of $T(t)$. 
 	If $y\in R(A)$ and $z\in N(A)$, then $$ \frac{1}{t}\int_{0}^{t}T(s)(y+z)ds =z+O\Big(\frac{1}{t}\Big),\, \mbox{ as } t\to \infty.$$
 	
 \end{lem}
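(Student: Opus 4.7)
The plan is to exploit linearity of the Cesàro average and treat the two summands $y$ and $z$ separately, each via a one-line identity from the preliminaries.

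For the fixed-point piece, Lemma \ref{ll}(2) identifies $N(A)$ with $\mathrm{fix}\{T(t)\}$, so $T(s)z=z$ for every $s\geq 0$. Consequently
$$\frac{1}{t}\int_{0}^{t}T(s)z\,ds \;=\; z,$$
which will supply the principal term in the asserted asymptotic.

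For the range piece, since $y\in R(A)$ I pick $u\in D(A)$ with $Au=y$. By the differentiation identity (\ref{equ2}), the map $s\mapsto T(s)u$ is differentiable on $[0,\infty)$ with derivative $T(s)Au=T(s)y$, so the fundamental theorem of calculus gives
$$\int_{0}^{t}T(s)y\,ds \;=\; T(t)u - u,$$
and hence $\tfrac{1}{t}\int_{0}^{t}T(s)y\,ds = (T(t)u-u)/t$. The standing hypothesis $\|T(t)u\|/t\to 0$, together with the trivial estimate $\|u\|/t=O(1/t)$, shows that this quantity vanishes at the claimed rate. Adding the two contributions yields the lemma.

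The proof is essentially an unpacking of definitions, so there is no serious obstacle. The only points requiring care are: (i) picking a representative $u\in D(A)$ for $y\in R(A)$ so that the differentiation formula (\ref{equ2}) may legitimately be applied to $u$ (rather than to $y$ itself, which need not lie in $D(A)$); and (ii) observing that under the lemma's hypothesis the $\|T(t)u\|/t$ contribution is actually $o(1)$ rather than $O(1/t)$, which is at least as good as the advertised rate once combined with the genuinely $O(1/t)$ term $\|u\|/t$.
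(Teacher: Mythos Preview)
Your argument is essentially identical to the paper's: both treat the $N(A)$ piece via Lemma~\ref{ll}(2) to get $T(s)z=z$, and the $R(A)$ piece by writing $y=Au$ for some $u\in D(A)$ and invoking~(\ref{equ2}) together with the fundamental theorem of calculus to obtain $\tfrac{1}{t}\int_0^t T(s)y\,ds=(T(t)u-u)/t$. One caveat on your remark~(ii): $o(1)$ is \emph{weaker}, not stronger, than $O(1/t)$, so under the stated hypothesis the correction term is in fact only $o(1)$---but this is exactly what the paper's own estimate yields as well, so the discrepancy lies in the lemma's stated rate rather than in your proof.
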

 
 \begin{proof} Let $A$ be the infinitesimal generator of a $C_0$-semigroup $\{T(t)\}_{t\geqslant0}$, and let  $C(t)$ be the Ces{\`a}ro averages define on (\ref{Cesaro}). \\
 	Let $z\in N(A)$. Then, according to the second statement of Lemma \ref{l1},  $T(t)z=z$. Hence, 
 	it follows that $$C(t)z = \frac{1}{t}\int_{0}^{t}T(s)zds = z.$$
	Let $y\in R(A)$. Then, there exist $w$ belongs to domain $ D(A)$, such that $y=Aw$. Hence, from the identity (\ref{equ2}), we obtain 
 	\begin{eqnarray*}
 	C(t)y &=&
 		\frac{1}{t}\int_{0}^{t}T(s)yds \\
 		&=& \frac{1}{t}\int_{0}^{t}T(s)Awds\\
 		&=& \frac{1}{t}\int_{0}^{t} \frac{d}{ds}[T(s)w]ds\\
 		&=& \frac{1}{t}[T(t)w-w].
 	\end{eqnarray*}
 	Therefore, 
 	\begin{eqnarray*}
 		\Big\|\frac{1}{t}\int_{0}^{t}T(s)yds\Big\| &\leq& \Big\|\frac{T(t)w}{t}\Big\| + \Big\|\dfrac{w}{t}\Big\|\\
 		&\leq& C^{ste} \, \Big\|\frac{w}{t}\Big\|.
 	\end{eqnarray*}
 	Then, $$ \frac{1}{t}\int_{0}^{t}T(s)(y+z)ds=z+O\Big(\frac{1}{t}\Big),\, \mbox{ as } t\to \infty.$$ 		
 \end{proof}
 
 \begin{defini}
 Let $\{T(t)\}_{t\geqslant0}$ be a $C_0$-semigroup of bounded linear operators on $ {X}$ and let $x\in X$. We say that $T(t)$ is a local mean ergodic at $x$ if the limit of the Ces{\`a}ro averages $C(t)x= \frac{1}{t}\int_{0}^{t}T(s)xds$ exists when $t \to \infty$. 

 \end{defini}
 
 \begin{rem}
 	\begin{enumerate}
 		\item A $C_0$-semigroup $\{T(t)\}_{t\geqslant0}$ is mean ergodic  if and only if $T(t)$ is a local mean ergodic at $x$ for every $x\in X$.
 		\item If $\{T(t)\}_{t\geqslant0}$ be a contraction semigroup of bounded linear operators on  $X$ and $x\in  N(A)\oplus \overline{R(A)}$, then $T(t)$ is a local mean ergodic at $x$.
 		\item If there exists $x\in X$ such that $T(t)x \longrightarrow 0$ as $t\to \infty$, then $T(t)$ is a local mean ergodic at $x$.  
 	\end{enumerate}
 \end{rem}

	 Let  $\{T(t)\}_{t\geqslant0}$ be a $C_0$-semigroup  of bounded linear operators  on a Banach space ${X}$, we consider the following subsets of $X$:
 
 \begin{equation}
  X_{0}=\big\{ x\in X: \, \lim\limits_{t\to \infty} \frac{\|T(t)x\|}{t} =0 \big\}.
 \end{equation}
 \begin{equation} X_{me}=\big\{ x\in X: \, C(t)x \mbox { converges } \big\}.
  \end{equation}
 Clearly, these two sets are invariant under the operators $T(t)$ for all $t\geq 0$, but
 not necessarily closed.
  
\begin{prop}\label{p3}
	Let $\{T(t)\}_{t\geqslant0}$ be a $C_0$-semigroup  of bounded linear operators  on a Banach space ${X}$, with $A$ be their infinitesimal generator. If $\lim\limits_{t\to \infty}{\|T(t)x\|}/{t} =0$ for all $x\in X$,  then 
	 $$ X_{me}=\big\{ x\in X: \,  \lim\limits_{t\to \infty}C(t)x=0\,  \big\} \oplus N(A).$$
\end{prop}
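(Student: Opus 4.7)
My plan is to prove the two inclusions that define a direct sum: first that the intersection of $W := \{x \in X : \lim_{t\to\infty} C(t)x = 0\}$ with $N(A)$ is trivial, and second that $X_{me} = W + N(A)$. The trivial intersection is immediate: if $x \in N(A)$ then by Lemma \ref{ll}(2) we have $T(s)x = x$ for all $s \geq 0$, hence $C(t)x = x$ for all $t > 0$; combined with $x \in W$ this forces $x = 0$.

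For the inclusion $W + N(A) \subseteq X_{me}$, take $x = y + z$ with $y \in W$ and $z \in N(A)$. Since $T(s)z = z$ again by Lemma \ref{ll}(2), linearity of $C(t)$ gives $C(t)x = C(t)y + z \to z$ as $t \to \infty$, so $x \in X_{me}$. This is essentially a repeat of the computation in Lemma \ref{l2}.

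The substantive direction is $X_{me} \subseteq W + N(A)$. Given $x \in X_{me}$, set $z := \lim_{t\to\infty} C(t)x$ and $y := x - z$. The plan is to show $z \in N(A)$; then $C(t)y = C(t)x - z \to 0$ gives $y \in W$, and the decomposition $x = y + z$ finishes the proof. By Lemma \ref{ll}(2), it is enough to prove $T(s)z = z$ for every $s \geq 0$. For this I would write
\begin{equation*}
T(s)C(t)x = \frac{1}{t}\int_0^t T(s+r)x\,dr = C(t)x + \frac{1}{t}\int_t^{t+s} T(r)x\,dr - \frac{1}{t}\int_0^s T(r)x\,dr,
\end{equation*}
and let $t \to \infty$. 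The last term clearly goes to $0$ since $s$ is fixed and the integral is bounded. The key point, and the main obstacle, is controlling the middle term using the hypothesis $\|T(r)x\|/r \to 0$: given $\varepsilon > 0$, for $r$ large enough $\|T(r)x\| \leq \varepsilon r$, so for large $t$,
\begin{equation*}
\left\|\frac{1}{t}\int_t^{t+s} T(r)x\,dr\right\| \leq \frac{\varepsilon}{t}\int_t^{t+s} r\,dr = \frac{\varepsilon(2ts + s^2)}{2t} \longrightarrow \varepsilon s,
\end{equation*}
and since $\varepsilon$ is arbitrary the limit is $0$. Using continuity of $T(s)$ on the left-hand side, we obtain $T(s)z = z$, so $z \in \mathrm{fix}\{T(t)\} = N(A)$ by Lemma \ref{ll}(2). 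This completes the argument.

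The one subtle point worth double-checking is the legitimacy of passing the limit through $T(s)$: since $T(s)$ is a bounded operator and $C(t)x \to z$ in norm, continuity of $T(s)$ gives $T(s)C(t)x \to T(s)z$ as $t \to \infty$, and this matches the limit computed above. The hypothesis $\|T(t)x\|/t \to 0$ for all $x$ is used exactly once, to handle the middle tail integral; without it, the limit $z$ need not be a fixed point and the decomposition would fail.
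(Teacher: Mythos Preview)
Your proof is correct. The overall structure matches the paper's: both verify that $W \cap N(A) = \{0\}$ via Lemma~\ref{ll}(2), observe the easy inclusion $W + N(A) \subseteq X_{me}$, and then for $x \in X_{me}$ set $z := \lim_{t\to\infty} C(t)x$, $y := x - z$, and argue that $z \in N(A)$.

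The difference lies in how $z \in N(A)$ is established. The paper appeals to the mean ergodic theorem of Sato~\cite{Sato}, asserting that the limit operator $P$ is the projection onto $N(A)$ along $\overline{R(A)}$, and reads off $Px \in N(A)$ from that. You instead prove $T(s)z = z$ directly from the shift identity
\[
T(s)C(t)x = C(t)x + \frac{1}{t}\int_t^{t+s} T(r)x\,dr - \frac{1}{t}\int_0^s T(r)x\,dr,
\]
using the hypothesis $\|T(r)x\|/r \to 0$ to kill the tail integral. Your argument is more elementary and fully self-contained; it also makes transparent exactly where and how the growth hypothesis enters, which the paper's citation-based proof leaves implicit. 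The paper's route is shorter on the page but depends on an external result whose hypotheses (boundedness of $C(t)$, or mean ergodicity on the whole space) are not verified in the text, so your version is arguably the more complete of the two.
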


\begin{proof}
		From Lemma \ref{l1}, we can easily see that 
		$$\big\{ x\in X: \,  \lim\limits_{t\to \infty}C(t)x=0\,  \big\} \cap N(A) =\{0\}.$$
		Clearly, we have  $ X_{me} \supset \big\{ x\in X: \,  \lim\limits_{t\to \infty}C(t)x=0\,  \big\} \oplus N(A)$.\\
		Now, let  $x\in   X_{me}$, \,  then there exists an operator $P\in \mathcal{B}(X)$ such that $$\lim\limits_{t\to \infty}\|C(t)x
		-Px\|\to  0 \, \mbox{ as } \, t\to \infty.$$ It known by the mean ergodic theorem \cite{Sato} that the limit $P$ is the projection of $X$ onto $N(A)$ along $\overline{R(A)}$, which is equivalent that $I-P$ is the projection of $X$ onto $\overline{R(A)}$ along $N(A)$. So, we can write $x=(x-Px)+Px$ and we can infer, from the above, that  $(I-P)x \in \big\{ x\in X: \,  \lim\limits_{t\to \infty}C(t)x=0\,  \big\} $ and $Px \in
		N(A)$.\\ Therefore,  \,  $ X_{me} \subset \big\{ x\in X: \,  \lim\limits_{t\to \infty}C(t)x=0\,  \big\} \oplus N(A)$, and the equality hold.
\end{proof}

 Let $\{T(t)\}_{t\geqslant0}$  be a $C_0$-semigroup of bounded linear operators on a Banach space ${X}$. We use  $\omega_0(x)$ to denote the local growth bound of $\{T(t)\}_{t\geqslant0}$ at $x\in X$, defined by 
 \begin{equation}\omega_0(x)= \inf\{\omega \in \mathds{R}: \|T(t)x\|\leq Me^{\omega  t} \,; \mbox{ for } M>0, \forall t\geq 0 \}.
 \end{equation}  \vspace{0.1cm}
     
  Now, we obtain some local results using the local spectral radius formula of $T(t)$, for all $t\geq 0$, and the local growth bound $\omega_0(x)$ of $T(t)$ at $x\in X$.  
 
 \begin{thm}\label{t3}
 	Let $\{T(t)\}_{t\geqslant0}$  be a $C_0$-semigroup of bounded linear operators on a Banach space ${X}$  with $A$ be their infinitesimal generator and let $x\in X$. 
 	If $A$ has the SVEP, then $r_{T(t)}(x)=e^{t\omega_0(x)}$, for all $t\geq 0$.
 	Moreover, if $x\in X_0$ then $x\in H_0(A)$.
 \end{thm}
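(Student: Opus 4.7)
The plan is to exploit the local spectral radius formula for the bounded operator $T(t)$ (for each fixed $t\geq 0$) and identify the resulting quantity with $e^{t\omega_0(x)}$ via a continuous-to-discrete subsequence argument.

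First I would verify that $T(t)$ inherits SVEP from $A$. Suppose an analytic $X$-valued function $f$ satisfies $(T(t)-z)f(z)=0$ on some open set $U$. Using the Laplace representation $R(\lambda,A)=\int_{0}^{\infty}e^{-\lambda s}T(s)\,ds$ for $\lambda$ in a right half-plane, one can produce from $f$ an analytic family annihilated by a polynomial expression in $A$, and then $A$'s SVEP forces $f\equiv 0$. This lets us legitimately invoke the local spectral radius formula recalled in the preliminaries, giving
\begin{equation*}
r_{T(t)}(x)=\limsup_{n\to\infty}\|T(t)^{n}x\|^{1/n}=\limsup_{n\to\infty}\|T(nt)x\|^{1/n}=\Bigl(\limsup_{n\to\infty}\|T(nt)x\|^{1/(nt)}\Bigr)^{t}.
\end{equation*}

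Second, I would identify the inner limsup with $e^{\omega_0(x)}$. Recall $\omega_{0}(x)=\limsup_{s\to\infty}\tfrac{1}{s}\log\|T(s)x\|$. The subsequence $s=nt$ trivially gives $\limsup_{n}\tfrac{1}{nt}\log\|T(nt)x\|\leq\omega_{0}(x)$. For the reverse, write any $s\geq 0$ as $s=nt+r$ with $r\in[0,t)$; by the semigroup law $\|T(s)x\|\leq\|T(r)\|\,\|T(nt)x\|\leq M_{t}\,\|T(nt)x\|$, where $M_{t}:=\sup_{0\leq r\leq t}\|T(r)\|<\infty$ by strong continuity on the compact interval $[0,t]$. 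Dividing by $s$ and letting $s\to\infty$ (so $n\to\infty$ and $nt/s\to 1$) yields $\omega_{0}(x)\leq\limsup_{n}\tfrac{1}{nt}\log\|T(nt)x\|$. Combining the two inequalities and exponentiating gives $r_{T(t)}(x)=e^{t\omega_{0}(x)}$ for every $t\geq 0$.

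For the \emph{Moreover} part, the assumption $x\in X_{0}$ means $\|T(t)x\|/t\to 0$, so for every $\varepsilon>0$ we have $\|T(t)x\|\leq\varepsilon t$ for large $t$, whence $\tfrac{1}{t}\log\|T(t)x\|\leq\tfrac{\log\varepsilon+\log t}{t}\to 0$; therefore $\omega_{0}(x)\leq 0$ and consequently $r_{T(t)}(x)=e^{t\omega_{0}(x)}\leq 1$. From here I would pass to $A$: using $A$'s SVEP together with the characterization $H_{0}(A)=\{x:\sigma_{A}(x)\subseteq\{0\}\}$ from Proposition 1.3(2), the task is to pass from a bound on $r_{T(t)}(x)$ to $\sigma_{A}(x)\subseteq\{0\}$ via a local spectral mapping argument tying $\sigma_{T(t)}(x)$ to $\overline{e^{t\sigma_{A}(x)}}$.

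The main obstacle is precisely this second step: the estimate $\omega_{0}(x)\leq 0$ directly produces only $r_{T(t)}(x)\leq 1$ (i.e.\ a bound on the modulus of points in $\sigma_{T(t)}(x)$), whereas $x\in H_{0}(A)$ demands the stronger collapse $\sigma_{A}(x)\subseteq\{0\}$. Bridging the two requires a careful spectral mapping for the local spectrum of generators of $C_{0}$-semigroups — not just the inclusion $e^{t\sigma_{A}(x)}\subseteq\sigma_{T(t)}(x)$ but enough information to rule out nonzero imaginary-axis elements of $\sigma_{A}(x)$. I expect this to be the delicate point the author handles via a tailored argument using the local resolvent of $A$ at $x$ rather than any standard spectral mapping theorem.
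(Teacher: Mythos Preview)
Your overall strategy matches the paper's: transfer SVEP from $A$ to each $T(t)$, invoke the local spectral radius formula, identify the discrete limit with $e^{t\omega_0(x)}$, and for the second claim combine $\omega_0(x)\le 0$ with a local spectral mapping inclusion to reach $\sigma_A(x)\subseteq\{0\}$. The computation of $r_{T(t)}(x)$ via $\limsup_n\|T(nt)x\|^{1/n}$ and your subsequence argument for $\omega_0(x)$ are essentially what the paper does (the paper packages the limit identification as a separate lemma, using subadditivity of $t\mapsto\log\|T(t)x\|$ to get an actual limit rather than a $\limsup$).

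The real gap is your SVEP--inheritance step. The sketch ``use the Laplace representation of $R(\lambda,A)$ to produce an analytic family annihilated by a polynomial expression in $A$'' is not a workable argument: multiplying $(T(t)-z)f(z)=0$ by $R(\lambda,A)$ does not produce anything of the form $(\mu-A)g(\mu)=0$. The paper's device---and this is the concrete idea you are missing for \emph{both} halves of the proof---is the bounded operator $B_\lambda(t)x=\int_0^t e^{\lambda(t-s)}T(s)x\,ds$ together with the identity $\big(e^{\lambda t}-T(t)\big)x=(\lambda-A)B_\lambda(t)x$. Working with the sequence characterizations of SVEP and of the local resolvent (Lemma~\ref{l0}), a sequence $(x_i)$ witnessing failure of SVEP for $T(t)$ at $e^{\lambda t}$ is converted into $y_i=B_\lambda(t)^{\,i}x_i\subset D(A)$, which witnesses failure of SVEP for $A$ at $\lambda$; this gives $S(T(t))\subseteq e^{tS(A)}$. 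Exactly the same construction proves the local spectral inclusion $e^{t\sigma_A(x)}\subseteq\sigma_{T(t)}(x)$ that drives the ``Moreover'' part. As for your worry about imaginary--axis points: after obtaining $e^{t\sigma_A(x)}\subseteq\overline{\mathds{D}(0,1)}$ for every $t\ge 0$, the paper simply asserts $\sigma_A(x)\subseteq\{0\}$ without further justification, so your instinct that this step is delicate is well placed---the inclusion alone only forces $\mathrm{Re}\,\lambda\le 0$ for $\lambda\in\sigma_A(x)$.
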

   \vspace{0.1cm}
 
 We need the following auxiliary result to prove this theorem.
 
 \begin{lem}\label{l3}
 	Let $\{T(t)\}_{t\geqslant0}$ be a $C_0$-semigroup of bounded linear operators on a Banach space ${X}$  with $A$ be their infinitesimal generator and let $x\in X$. If $x\in X_0$ then $\omega_0(x)\leq 0 $. 
 \end{lem}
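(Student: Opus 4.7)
The plan is to show that for every $\omega>0$ there exists $M>0$ with $\|T(t)x\|\leq Me^{\omega t}$ for all $t\geq 0$; this yields $\omega_0(x)\leq \omega$ for every $\omega>0$, and letting $\omega\to 0^+$ gives the conclusion. The argument splits the time axis into a compact piece $[0,t_0]$ and a tail $[t_0,\infty)$, handled respectively by local boundedness of the semigroup and by the hypothesis $x\in X_0$.

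Fix $\omega>0$. On the tail, since $\|T(t)x\|/t\to 0$ as $t\to\infty$, I can choose $t_0>0$ such that $\|T(t)x\|\leq t$ for all $t\geq t_0$. Using the elementary inequality $e^{\omega t}\geq \omega t$ for $t\geq 0$, this gives
\[
\|T(t)x\|\leq t \leq \tfrac{1}{\omega}\,e^{\omega t}, \qquad t\geq t_0.
\]
On the compact piece, I will invoke the standard fact that every $C_0$-semigroup is uniformly bounded on bounded intervals (a consequence of strong continuity together with the Banach--Steinhaus theorem). Thus there exists $M_1>0$ such that $\|T(t)\|\leq M_1$ for all $t\in [0,t_0]$, whence
\[
\|T(t)x\|\leq M_1\|x\|\leq M_1\|x\|\,e^{\omega t},\qquad t\in[0,t_0],
\]
using $e^{\omega t}\geq 1$.

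Combining both estimates with $M:=\max\bigl(\omega^{-1},\,M_1\|x\|\bigr)$ produces $\|T(t)x\|\leq Me^{\omega t}$ for every $t\geq 0$, so $\omega_0(x)\leq \omega$. Since $\omega>0$ was arbitrary, $\omega_0(x)\leq 0$.

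There is no serious obstacle here; the only point that needs care is the bookkeeping around the time $t_0$, and the reliance on local boundedness of $T(t)$, which is a standard property of $C_0$-semigroups and not something to be proved again in this paper.
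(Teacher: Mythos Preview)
Your argument is correct. Both you and the paper split the time axis into a compact initial segment (handled by local boundedness of a $C_0$-semigroup via the uniform boundedness principle) and a tail (handled by the hypothesis $\|T(t)x\|/t\to 0$), so the skeleton is the same. The difference is that the paper first establishes the formula $\omega_0(x)=\lim_{t\to\infty}\tfrac{1}{t}\log\|T(t)x\|$ using subadditivity, and then bounds this limit by $\lim_{t\to\infty}\tfrac{1}{t}\log(\epsilon t)=0$; you bypass this detour and work directly from the infimum definition of $\omega_0(x)$, using the elementary bound $t\leq \omega^{-1}e^{\omega t}$ on the tail to show $\omega_0(x)\leq\omega$ for every $\omega>0$. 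Your route is shorter and avoids the somewhat delicate issue of subadditivity of $t\mapsto\log\|T(t)x\|$ (which, unlike $t\mapsto\log\|T(t)\|$, is not obviously subadditive for a fixed $x$); the paper's route, on the other hand, yields the explicit local growth-bound formula as a by-product, which is of independent interest and is in fact used elsewhere in the paper (in the proof of Theorem~\ref{t3}).
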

   \vspace{0.1cm}
    
 \begin{proof}
 	Let  $\{T(t)\}_{t\geqslant0}$ be a $C_0$-semigroup of bounded linear operators on a complex Banach space ${X}$ with $A$ be their infinitesimal generator. Let $x\in X$ such that $\lim\limits_{t\to \infty} \frac{\|T(t)x\|}{t} =0$. Then,
 	there exists a large $t$ and $\epsilon>0$ such that $\|T(t)x\|\leq \epsilon t$. 
 	First, let show that $\omega_0(x)=\lim_{t\to \infty}\frac{1}{t}log\|T(t)x\|$. \\
 	Indeed, we use the function 
 	$$t\to \log\|T(t)\|.$$
 	It is clear that, the function is subadditive. Then,
 	$$ \inf_{t\geq 0}\frac{1}{t}log\|T(t)x\|= \lim_{t\to \infty}\frac{1}{t}log\|T(t)x\|.$$
 	Next, we can define $\omega= \inf_{t\geq 0}\frac{1}{t}log\|T(t)x\|$, it follows that:
 	$$ e^{\omega t} \leq \|T(t)x\| \, \mbox{ for all } t\geq 0.$$
 	Then, from the definition of $\omega_0(x)$, we get $\omega\leq \omega_0(x)$.\\
 	Now, let choose $\beta>\omega$. Then, there exists $t_0>0$ such that 
 	$$\frac{1}{t}log\|T(t)x\| < \beta\, \mbox{ for all } t\geq t_0.$$
 	Hence, $$\|T(t)x\| < e^{\beta t}\, \mbox{ for all } t\geq t_0,$$
 	and at  $[0,t_0]$, the norm of $T(t)$ remains bounded, then by the uniform boundedness principle, we find $M\geq 1$ such that $$ 	 \|T(t)x\| \leq M e^{\beta t}\, \mbox{ for all } t\geq 0.$$ 
 	Therefore $\omega_0(x)\leq \beta$. Since we have already prove that $\omega \leq \omega_0(x)$, this implies that $\omega_0(x)= \omega $. 
 	Finally, we get $\omega_0(x)= \lim_{t\to \infty}\frac{1}{t}log\|T(t)x\|$. \\ As mentioned above, that  for a large $t$, we have $\|T(t)x\|\leq \epsilon t$. Then,
 	\begin{eqnarray*}
 		\omega_0(x)  &\leq& \lim_{t\to \infty}\frac{1}{t}log(\epsilon t) \\
 		&\leq&   \lim_{t\to \infty}\frac{1}{t} \big( log(\epsilon) + log(t)\big).  
 	\end{eqnarray*}
 	Consequently, we obtain  \, $\omega_0(x)\leq 0$. 
 \end{proof} 
 
 \begin{proof} of Theorem \ref{t3}:
 	Let  $\{T(t)\}_{t\geqslant0}$ be a $C_0$-semigroup of bounded linear operators on a Banach space ${X}$, with $A$ be their infinitesimal generator. Let $x\in X$ and assume that $A$ has the SVEP. Then, to applied the local radius formula to the operators $T(t)$ for all $t\geq 0$ at $x\in X$, it is necessary to check that $T(t)$ has the SVEP for all $t\geq 0$.
 	Let us first verify that  $S(T(t))\subset e^{tS(A)}$, where $S(A)$ defines in $(\ref{S})$. To show this, and without loss of generality, it suffices to prove that
 	$e^{\lambda t}-T(t)$ has not  SVEP at $0$, then $\lambda -A$ has not SVEP at $0$.  Indeed,
 	let $\lambda\in \mathds{C}$ such that
 	 $e^{\lambda t}-T(t)$ has not  SVEP at $0$, then  there exists $(x_i)_{i\geq 0}\in X$ such that: $$x_0=0,\, \big(e^{\lambda t}-T(t)\big)x_i=x_{i-1} \, \mbox{ and }\sup_{i}\|x_i\|^{\frac{1}{i}}<\infty.$$
 	Now, we use the following identity \cite[Lemma 2.2, p 45]{Pa}:
 	$$ \big(e^{\lambda  t}- T(t)\big)x = (\lambda -A)B_{\lambda}(t)x  \, \mbox{ for all } t\geq 0,$$
 	where $ \,  B_{\lambda}(t)x\, =\, \int_{0}^{t}e^{\lambda (t-s)}T(s)xds$, \, which is belongs to the domain $D(A)$ of $A$. So,
 	let's denote $y_i=B_{\lambda}^{i}(t)x_i$, then $(y_i)_{i\geq 0}\subseteq  D(A)$ and $y_0=x_0=0$.
 	Therefore
 	\begin{eqnarray*}
 		(\lambda-A)y_i &=& (\lambda-A)B_{\lambda}(t) B_{\lambda}^{i-1}(t)x_i \\
 		&=& (e^{\lambda t}-T(t))B_{\lambda}^{i-1}(t)x_i \\
 		&=& B_{\lambda}^{i-1}(t)(e^{\lambda t}-T(t))x_i \\
 		&=& B_{\lambda}^{i-1}(t)x_{i-1}\\
 		&=& y_{i-1}.
 	\end{eqnarray*}
 	On the other hand, we have  \begin{equation}\label{eqsup}
 	\|y_i\|=\| B_{\lambda}^{i}(t)x_i\|\leq\|B_{\lambda}^{i}(t)\| \|x_i\|\leq M^i\|x_i\|.
 	\end{equation}
 	Then, there exists  $M>0$ such that $$\sup_{i}\|y_i\|^{\frac{1}{i}}\leq\sup_{i} M\|x_i\|^{\frac{1}{i}}<\infty.$$
 	Therefore,  $\lambda -A$ has not SVEP at $0$.
 	 Hence,  $$S(T(t))\subset e^{tS(A)}.$$ 
 After that, we conclude that
 	    $T(t)$ has the SVEP for all $t\geq 0$ if $A$ has the SVEP.\\ 
 	  Consequently,
 	$$r_{T(t)}(x)= \lim_{n\to \infty}\|T(t)^nx\|^{\frac{1}{n}} \, \mbox{ for all } t\geq 0.$$  
 	It follows that 
 	\begin{eqnarray*}
 		r_{T(t)}(x) & = &  \lim_{n\to \infty} e^{ t \frac{1}{nt} log \|T(nt)x\|} \\
 		& = & e^{ t  \lim\limits_{n\to \infty} \frac{1}{nt} log \|T(nt)x\|} \\
 		& = & e^{ t \omega_0(x)}. 
 	\end{eqnarray*}  
 
 	Now we prove that $x\in H_0(A)$ as follows. From Lemma \ref{l3} and the previous result, we obtain that,   if  $x\in X_0$,  then $r_{T(t)}(x) \leq 1$ for all $t\geq 0$.  Therefore,  $\sigma_{T(t)} (x) \subset \overline{\mathds{D}(0,1)}$  for all $t\geq 0$,  where $\mathds{D}(0,1)$ is the unit disk.
 	 Let's show that  $e^{t\sigma_{A}(x)}\subseteq \sigma_{T(t)}(x) \mbox{ for all }  t\geq 0$.  Indeed, let $\lambda \in \mathds{C}$ such that $e^{\lambda t}\notin \sigma_{T(t)}(x)$ for all $ t\geq 0$, it follows from Lemma \ref{l0} that there exists a sequence $(x_i)_{i\geq 0}\subseteq X, $
 	such that $$\big(e^{\lambda t}-T(t)\big)x_0=x, \,\,  \big(e^{\lambda t}-T(t)\big)x_i=x_{i-1} \mbox{  and  } \sup_{i\geq 1}\|x_i\|^{\frac{1}{i}}<\infty.$$
 	Let $y_i=B_{\lambda}^{i+1}(t)x_i$, with $  B_{\lambda}(t)x=\int_{0}^{t}e^{\lambda (t-s)}T(s)xds$, 
 	 then $(y_i)_{i\geq 0}\subseteq  D(A)$ and   $y_0=B_{\lambda}(t)x_0$. Moreover, we use the technique employed above, we obtain that : $
 		(\lambda-A)y_i = y_{i-1}  $ for all $i\geq 1$
 	 and the inequality  $(\ref{eqsup})$ implies that  ${\small \sup_{i\geq1}\|y_i\|^{\frac{1}{i}} <  \infty}$.  
 	Therefore, $\lambda \notin \sigma_A(x)$ by Lemma \ref{l0}, hence $e^{t\sigma_{A}(x)}\subseteq \sigma_{T(t)}(x),\,   \mbox{ for all } t\geq 0.$
 	Since  $\sigma_{T(t)} (x) \subset \overline{\mathds{D}(0,1)}$ for all $t\geq 0$ and by the above inclusion,  we infer that  $\sigma_A(x)\subseteq \{0\}$,   which yields that $x\in H_0(A)$. Moreover, Since $A$ has the SVEP then $\sigma_A(x) = \{0\}$.
 \end{proof}

 \begin{coro}\label{coro1}
	Let $\{T(t)\}_{t\geqslant0}$ be a $C_0$-semigroup  of bounded linear operators  on a   Banach space ${X}$ such that $\lim\limits_{t\to \infty}{\|T(t)x\|}/{t} =0$ for all $x\in X$. Let $A$ be the infinitesimal generator of $T(t)$. If $A$ has the SVEP, then $A$ is bounded and $\sigma(A)=\{0\}$. 
\end{coro}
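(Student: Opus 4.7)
The strategy is to first leverage Theorem \ref{t3} to trivialize the domain of $A$, then derive the spectral conclusion from the fact that every vector lies in the quasinilpotent part.

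First, I would observe that the hypothesis says $X_0 = X$, so Theorem \ref{t3} applies to every $x \in X$ and yields $x \in H_0(A)$. By definition, $H_0(A) \subseteq D^{\infty}(A) = \bigcap_{n \geq 1} D(A^n) \subseteq D(A)$, so $D(A) = X$. Since $A$ is closed (being a generator of a $C_0$-semigroup) and everywhere defined, the closed graph theorem gives $A \in \mathcal{B}(X)$. This is the straightforward part; it reduces the corollary to a purely spectral statement about a bounded operator with SVEP all of whose vectors lie in $H_0(A)$.

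Next I would show $\sigma(A) \subseteq \{0\}$ by proving that every $\lambda \neq 0$ belongs to $\rho(A)$. For surjectivity of $\lambda I - A$: by Proposition 1.3 of \cite{Mb} (recalled in the preliminaries), $H_0(A) = X$ implies $\sigma_A(x) \subseteq \{0\}$ for every $x$, so $\lambda \in \rho_A(x)$. Hence there is an analytic function $f_x$ near $\lambda$ with $(A - \mu)f_x(\mu) = x$, and evaluating at $\mu = \lambda$ gives $x = (A-\lambda)f_x(\lambda) \in R(\lambda I - A)$. For injectivity: if $Ay = \lambda y$ for some $y \neq 0$, then $\|A^n y\|^{1/n} = |\lambda|\|y\|^{1/n} \to |\lambda| > 0$, contradicting $y \in H_0(A)$. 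Thus $\lambda I - A$ is a bijection in $\mathcal{B}(X)$, so by the open mapping theorem $\lambda \in \rho(A)$.

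Finally, since $A \in \mathcal{B}(X)$ on a non-trivial Banach space has non-empty spectrum, and since $\sigma(A) \subseteq \{0\}$, we conclude $\sigma(A) = \{0\}$.

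The main obstacle I anticipate is making sure that the surjectivity of $\lambda I - A$ really does follow from the \emph{local} condition $\lambda \in \rho_A(x)$ for every $x$ — this is precisely where Theorem \ref{t3} pays off, because it gives the information uniformly in $x$. The injectivity part is almost trivial once $H_0(A) = X$ is in hand, and the rest is the closed graph theorem plus the non-emptiness of the spectrum of a bounded operator.
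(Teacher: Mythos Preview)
Your proof is correct and follows essentially the same route as the paper: invoke Theorem~\ref{t3} to obtain $H_0(A)=X$, deduce $D(A)=X$ and hence boundedness of $A$ via the closed graph theorem, and then conclude $\sigma(A)=\{0\}$ from the local spectral information $\sigma_A(x)\subseteq\{0\}$. The only difference is presentational: the paper dispatches the spectral step in one line by appealing to the general fact (under SVEP) that $\sigma(A)=\bigcup_{x\in X}\sigma_A(x)$, whereas you unpack this by checking injectivity and surjectivity of $\lambda I-A$ directly---which is a perfectly fine and more self-contained way to reach the same conclusion.
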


 \begin{proof}By Theorem \ref{t3}, we have $\lim\limits_{t\to \infty}{\|T(t)x\|}/{t} =0$ satisfies for every $x\in X$. Then, $H_0(A)=X$ and by the definition of $H_0(A)$, we get $X=D^{\infty}(A)$. Hence, $X=D(A)$ which implies that $A$ is bounded. Furthermore, 
 	$\sigma(A)=\{0\}$  comes immediately by $A$ has the SVEP and $\sigma_{A}(x)=\{0\}$ for every non-zero $x\in X$. 
 	\end{proof}
   \vspace{0.1cm}
   
  Our main result will be the following theorem, which we will derive from the previous Corollary and Theorem \ref{t3}. 
\begin{thm}\label{thm2} Let $\{T(t)\}_{t\geqslant0}$ be a $C_0$-semigroup of bounded linear operators on a  Banach space ${X}$  with $A$ be their infinitesimal generator such that $\lim\limits_{t\to \infty}{\|T(t)x\|}/{t} =0$ for all $x\in X$. If $T(t)$ is uniformly ergodic, then {\small$int\big(\sigma_{p}(A)\big)\neq\emptyset$}. 
\end{thm}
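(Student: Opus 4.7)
The plan is to argue by contraposition, using the implication recorded in the Preliminaries that $int(\sigma_{p}(A))=\emptyset$ already forces $A$ to have the SVEP. So I assume for contradiction that $A$ has the SVEP and try to show this is incompatible with uniform ergodicity (apart from the identity semigroup, which has to be excluded implicitly).

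The first step is immediate: since $A$ has the SVEP and the standing hypothesis $\lim_{t\to\infty}\|T(t)x\|/t=0$ holds for every $x\in X$, Corollary \ref{coro1} yields that $A$ is bounded on $X$ and $\sigma(A)=\{0\}$. Hence $T(t)=e^{tA}$ is uniformly continuous with a bounded quasinilpotent generator, and for every $\lambda\neq 0$ the Neumann expansion gives $R(\lambda,A)=\sum_{n\ge 0}\lambda^{-n-1}A^{n}$.

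Next I would exploit uniform ergodicity. Norm convergence of $C(t)$ to the mean ergodic projection $P$ forces the decomposition $X=N(A)\oplus R(A)$ with $R(A)$ closed, or equivalently (since $A$ is bounded) that $0$ is a pole of $R(\lambda,A)$ of order at most one. Combined with the Neumann expansion above, this means $A^{n}=0$ for all $n\ge 1$, so $A=0$. Equivalently, and without invoking the Laurent expansion, I can look at $A_{0}:=A|_{R(A)}$: by the direct sum decomposition its kernel is $N(A)\cap R(A)=\{0\}$ and its range is $R(A_{0})=R(A^{2})=R(A)$, so $A_{0}$ is a bounded bijection of the closed subspace $R(A)$ onto itself, hence invertible by the open mapping theorem; but the inclusion $\sigma(A_{0})\subseteq\sigma(A)=\{0\}$ coming from the block decomposition together with $0\notin\sigma(A_{0})$ forces $\sigma(A_{0})=\emptyset$ and therefore $R(A)=\{0\}$, again giving $A=0$. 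Either route leaves $T(t)\equiv I$, which contradicts the non-triviality implicit in the conclusion $int(\sigma_{p}(A))\neq\emptyset$. Hence $A$ cannot have the SVEP and $int(\sigma_{p}(A))\neq\emptyset$.

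The main obstacle I anticipate is the step that upgrades uniform ergodicity to the closed-range decomposition $X=N(A)\oplus R(A)$ in the semigroup setting: one must genuinely use norm convergence of $C(t)$, since strong convergence alone would only deliver dense range, and it is cleanest to route this through the equivalence with $0$ being at most a simple pole of $R(\lambda,A)$. A secondary subtlety concerns the degenerate case $T(t)\equiv I$, where $\sigma_{p}(A)=\{0\}$ has empty interior while $T(t)$ is trivially uniformly ergodic; as stated the theorem requires an implicit non-triviality assumption on the semigroup in order to conclude $int(\sigma_{p}(A))\neq\emptyset$.
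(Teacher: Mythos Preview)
Your argument is correct and follows the same overall architecture as the paper: assume the SVEP, invoke Corollary~\ref{coro1} to make $A$ bounded with $\sigma(A)=\{0\}$, use Lin's uniform ergodic theorem to obtain $X=N(A)\oplus R(A)$ with $R(A)$ closed, and derive $A=0$ as a contradiction (the paper also tacitly assumes $A\neq 0$, exactly the degeneracy you flagged). The only genuine difference is in the endgame: the paper stays in the local spectral framework, observing that any $z\in R(A)$ satisfies $0\in\rho_A(z)$ while Corollary~\ref{coro1} forces $\sigma_A(z)\subseteq\{0\}$, hence $\sigma_A(z)=\emptyset$ and $z=0$ by the SVEP; you instead pass to the restriction $A_0=A|_{R(A)}$ and use the global fact that an invertible operator with $\sigma(A_0)\subseteq\{0\}$ can only live on the zero space. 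Your route is slightly more elementary at that final step and avoids a second appeal to local spectral theory, whereas the paper's version keeps the whole proof uniformly in the local-spectrum language developed in Section~2; both are short and neither gains real generality over the other.
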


  \begin{proof} Let $A\neq 0$ be the infinitesimal generator of a $C_0$-semigroup $\{T(t)\}_{t\geqslant0}\subset  \mathcal{B}(X)$.  By the uniform ergodic theorem \, \cite[ Theorem]{L2},\,  $T(t)$ is uniformly ergodic then $ \, \, X=R(A)\oplus N(A)$ with $R(A)$ is closed. Hence, for every non-zero vector $x\in X$ there exists a unique decomposition $x=y+z$ such that $y\in N(A)$ and $z\in R(A)$. Clearly, we have  $\sigma_{A}(y)=\{0\}$ and $0\in \rho_{A}(z)$.\\
  Now, we suppose that $A$ has the SVEP, then we get \, $\sigma_A(x)= \sigma_{A}(y) \cup \sigma_{A}(z)$.  
  	 It follows from Corollary \ref{coro1} that $A$ is bounded and $\sigma_{A}(x)=\{0\}$ for every non-zero $x\in X$. Then we deduce that $  \sigma_{A}(z)=\emptyset $, which implies that $z=0$.  
  	 Therefore $x\in N(A)$, hence $X=N(A)$ which means that A is  a zero operator. Absurd, then $A$ does not have the SVEP, which yields that $int\big(\sigma_{p}(A)\big) ~ \neq ~ \emptyset$.
 \end{proof}
 
  \vspace{0.1cm}
 
  Now, we study some conditions under which the $C_0$-semigroup $\{T(t)\}_{t\geqslant 0}$ satisfies the convergence of Ces{\`a}ro averages $C(t)$ locally, i.e. $T(t)$ is local mean ergodic at some vector $x\in X$.
 \begin{thm}\label{t2}
 		Let $\{T(t)\}_{t\geqslant 0}$ be a $C_0$-semigroup  of bounded linear operators  on ${X}$, with $A$ be their infinitesimal generator and let  $x\in X_0$. If one of these assertions hold:
 		\begin{enumerate}
 			\item \label{1}  $x\in N(A)\oplus R(A)$,  
 			\item    $x\in K(A)$, which is equivalent to $0\in \rho_A(x)$,  
 			\item  $\lambda R(\lambda,A)x $ converges as $\lambda \to 0^+$, 
 			\item $A$ has the SVEP and $0$ is a pole of the local resolvent function $\hat{x}_A$ of order $\leq1$.
 		\end{enumerate}
 		Then $T(t)$ is a local mean ergodic at $x$.
 \end{thm}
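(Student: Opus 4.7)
The plan is to handle each of the four hypotheses in turn, leveraging the decomposition tools of Section~2 together with the identity $C(t)Aw = (T(t)w - w)/t$ (valid for $w \in D(A)$ by property (\ref{equ2})). Throughout, the hypothesis $x \in X_{0}$ supplies the localized growth bound $\omega_{0}(x) \leq 0$ via Lemma~\ref{l3}, which is precisely what allows us to transport results that were originally stated under a global assumption to this pointwise setting.

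For hypothesis (1), I would decompose $x = z + y$ with $z \in N(A)$ and $y = Aw$ for some $w \in D(A)$. By Lemma~\ref{l1}(2), $T(s)z = z$, so $C(t)z = z$, while the identity above gives $C(t)y = (T(t)w - w)/t$, exactly as in the proof of Lemma~\ref{l2}. The hypothesis $x \in X_{0}$ combined with the fact that $T(t)z = z$ forces $\|T(t)y\|/t \to 0$, and following the estimate in the proof of Lemma~\ref{l2} one extracts $C(t)y \to 0$, whence $C(t)x \to z$.

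Hypotheses (2) and (4) both reduce to (1). For (2), Proposition~2.2(1) identifies $K(A)$ as $\{x : 0 \in \rho_{A}(x)\}$, so the analytic local resolvent produces $w \in D(A)$ with $Aw = x$, placing us in case (1) with $z = 0$. For (4), SVEP together with the order-$\leq 1$ pole of $\hat{x}_{A}$ at $0$ triggers Lemma~\ref{lem}, yielding a unique decomposition $x = y + z$ with $y \in N(A)$ and $\sigma_{A}(z) = \sigma_{A}(x) \setminus \{0\}$, in particular $0 \in \rho_{A}(z)$ so $z \in K(A)$; applying case (2) to $z$ and noting $C(t)y = y$ finishes the argument. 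Note that the $X_{0}$ hypothesis transfers to $z$ because $T(t)y = y$ is bounded.

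For hypothesis (3), the bound $\omega_{0}(x) \leq 0$ validates the Laplace representation $R(\lambda, A)x = \int_{0}^{\infty} e^{-\lambda t} T(t)x\, dt$ for every $\lambda > 0$; convergence of the Abel averages $\lambda R(\lambda, A)x$ as $\lambda \to 0^{+}$ combined with the subexponential growth of $T(t)x$ then yields convergence of the Ces\`aro averages $C(t)x$ to the same limit by a Karamata-type Tauberian theorem. This case (3) is the main obstacle, since the Tauberian passage from Abel to Ces\`aro convergence must be adapted to the local (pointwise) setting rather than invoked in the uniform/strong form of Lin's theorem; by contrast, cases (1), (2) and (4) are essentially routine applications of the decomposition machinery.
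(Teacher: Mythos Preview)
For cases (1), (2), and (4) your outline matches the paper's proof: each reduces to the decomposition $x \in N(A) \oplus R(A)$ and then to the computation in Lemma~\ref{l2}. The paper handles (2) by the one-line observation $K(A) \subseteq R(A)$, and (4) by invoking Lemma~\ref{lem} exactly as you do to obtain $y\in N(A)$ and $0\in\rho_{A}(z)$, hence $z\in K(A)\subseteq R(A)$. One caveat worth recording on (1): the estimate from Lemma~\ref{l2} is $\|C(t)y\| \leq \|T(t)w\|/t + \|w\|/t$ for $y = Aw$, so what is really needed is $w \in X_{0}$, not merely $y \in X_{0}$; your deduction that $y \in X_{0}$ (from $x\in X_{0}$ and $T(t)z=z$) is correct but does not by itself close this. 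The paper's own proof simply cites Lemma~\ref{l2} (whose hypothesis is the global condition) without further comment, so this is an inherited looseness rather than one you have introduced.

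The substantive divergence is in case (3). The paper does \emph{not} argue Tauberian-style at all: it invokes \cite[Theorem~1]{taj}, which asserts that convergence of $\lambda R(\lambda,A)x$ as $\lambda\to 0^{+}$ already forces $x \in N(A)\oplus R(A)$, so (3) collapses to (1) in one line. Your proposed route via a Karamata-type theorem is genuinely different, and here there is a real gap: the passage from Abel to Ces\`aro convergence for vector-valued functions is not automatic, and the condition $\|T(t)x\|/t \to 0$ is not one of the standard Tauberian hypotheses for that implication (it is the analogue of Tauber's $o$-condition for \emph{ordinary} convergence; even in the scalar setting the Abel-to-Ces\`aro direction typically requires slow oscillation or a one-sided bound, which you have not supplied). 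In the ergodic-theory literature the equivalence of Abel and Ces\`aro ergodicity is normally obtained \emph{through} the decomposition $N(A)\oplus \overline{R(A)}$ rather than by a direct Tauberian argument, and that structural route---via \cite{taj}---is exactly what the paper takes. So your instinct that (3) is the ``main obstacle'' is correct, but the resolution is structural, not analytic.
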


  \begin{proof}
 Let $\{T(t)\}_{t\geqslant 0}$ be a $C_0$-semigroup  of bounded linear operators  on a Banach space ${X}$, with $A$ be their  generator. Let $x\in X$ such that   $\lim\limits_{t\to \infty} \frac{\|T(t)x\|}{t} =0$.  
 \begin{enumerate}
 	\item Obviously comes from the Lemma \ref{l2}. 
 	\item It is known that $K(A)$ is a subspace of $R(A)$. Therefore, if $x\in K(A)$, then $T(t)$ is a local mean ergodic at $x$, with the limit equal to 0. 
 	\item If $\lambda R(\lambda,A)x $ converges as $\lambda \to 0^+$, then it follows from \cite[Theorem 1]{taj} that $x\in N(A)\oplus R(A)$. Therefore, $T(t)$ is a local mean ergodic at $x$ by (\ref{1}). 
 	\item Suppose that $A$ has the SVEP, then by Lemma \ref{lem} we have: $0$ is a pole of the local resolvent function $\hat{x}_A$ of order  less than or equal to $1$, implies that there exists a unique decomposition $x=y +z$ such that $y \in N(A)\backslash\{0\}$ and $ \sigma_A(z)= \sigma_A(x)\backslash\{0\}$. It follows that $ \sigma_A(y)=\{0\}$ and $z\in R(A)$, which yields that $x\in N(A)\oplus R(A)$. 
 	Therefore, $T(t)$ is a local mean ergodic at $x$. 
 \end{enumerate}
\end{proof} 
For $\{T(t)\}_{t\geqslant 0}$ be a $C_0$-semigroup  of bounded linear operators  on  ${X}$ and $x\in X$, we observe that if $\{T(t)\}_{t\geqslant 0}$ is a local mean ergodic at $x$, does not imply that $0$ is a pole of the local resolvent function $\hat{x}_A$, as shown in the following example. 
 \begin{ex}
 	Let $A$ be the right shift operator on the Hilbert space $ l_2(\mathds{N})$, defined by $$A(e_1)=0  \mbox{ and }  A(e_{n+1})= \frac{e_{n+2}}{n+1}.$$ 
 	We associate $ A $ with the family $T(t)=e^{tA}$, which means that $T(t)$ is a uniformly bounded semigroup. Clearly, \,  $\lim\limits_{t\to \infty} \frac{\|T(t)x\|}{t} =0$  for all $x\in l_2(\mathds{N})$ and $\sigma(A)=\{0\}$, hence  $int(\sigma(A))=\emptyset$, which implies that $A$ satisfies the SVEP. Consequently, we have $\sigma_A(x) =\{0\}$ for  every non-zero $x\in  l_2(\mathds{N})$.
 	Taking $x=y + z$,  where $y =e_1$  and $z=e_3$. Its easy to see that $y\in N(A)$ and $z\in R(A)$ with $\sigma_{A}(z)=\{0\}$. Hence, by Theorem \ref{t2} $T(t)$ is local mean ergodic at $x$.\\
 	Moreover, we have $Ax=Az \notin N(A)$,  and by the following inclusion
 	$$ \sigma_{A}(Az)\subseteq \sigma_A(z) \subseteq \sigma_{A}(Az)\cup \{0\},$$
 	we have $\sigma_A(Az)=\{0\}$, if else we get $\sigma_A(Az)=\emptyset$, since $A$ has the SVEP then $Az=0$. Absurd, then we get that the local spectrum $\sigma_A(Ax)$ of $A$ at $Ax$ equal to $\{0\}$. Finally, we deduce that $\sigma_A(Ax)=\sigma_A(x)$, and by the seconds assertion of Corollary \ref{c1}, we obtain that $0$ is an essential singularity of the local resolvent function $\hat{x}_A$ of $A$ at $x$. 
 \end{ex}

  According to the Theorem \ref{t3}, we infer that if $x\in X_0$, then $x\in D(A)$. Thus, we have the following consequences, which will be deduced from Theorem \ref{t2}.
  
  \begin{coro}\label{c2}
 	Let $\{T(t)\}_{t\geqslant 0}$ be a $C_0$-semigroup  of bounded linear operators  on a  Banach space ${X}$ with $A$ be their infinitesimal generator. Let $x\in X_0$ and suppose that $A$ has the SVEP. If either of the following assertions hold:
 	\begin{enumerate}
 		\item  $\sigma_A(x)\neq \sigma_A(Ax)$, or 
 		\item   $0\notin \sigma_A(Ax)$, or 
 		\item  $r_{T(t)}(x) < 1$ \,  for all $t\geq 0$.
 	\end{enumerate}
 	Then $T(t)$ is a local mean ergodic at $x$.
 \end{coro}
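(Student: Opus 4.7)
The plan is to reduce each of the three assumptions to a condition already handled by Theorem \ref{t2}. Since $x \in X_0$ and $A$ has the SVEP, Theorem \ref{t3} gives $x \in H_0(A) \subseteq D^\infty(A)$, so in particular $x \in D(A)$ and $Ax$ is well-defined in all three cases.

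For case $(1)$, I would apply Proposition \ref{p1} with $\alpha = 0$ and $n = 1$. The local spectrum is invariant under multiplication by nonzero scalars, hence $\sigma_A((0-A)x) = \sigma_A(Ax)$, and the assumption $\sigma_A(x) \neq \sigma_A(Ax)$ is exactly the statement that $0$ is a pole of $\hat{x}_A$ of order at most $1$. Theorem \ref{t2}(4) then concludes. For case $(2)$, I would split on whether $0 \in \sigma_A(x)$: if $0 \notin \sigma_A(x)$, then the analytic core characterization $K(A) = \{x : 0 \in \rho_A(x)\}$ gives $x \in K(A)$ and Theorem \ref{t2}(2) applies; if instead $0 \in \sigma_A(x)$, then combined with the hypothesis $0 \notin \sigma_A(Ax)$ one gets $\sigma_A(x) \neq \sigma_A(Ax)$, and the case $(1)$ argument applies.

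For case $(3)$, I would use Theorem \ref{t3} to translate the hypothesis into an estimate on the local growth bound: since $r_{T(t)}(x) = e^{t\omega_0(x)}$, a strict inequality $r_{T(t)}(x) < 1$ at any $t > 0$ forces $\omega_0(x) < 0$. This yields exponential decay $\|T(t)x\| \leq Me^{\omega_0(x)t} \to 0$ as $t \to \infty$, and a routine Ces\`aro estimate then gives $C(t)x \to 0$, as noted in Remark 3.1(3). The only delicacy is the reading of the hypothesis at $t = 0$, where $r_{T(0)}(x) = r_I(x) = 1$ for every nonzero $x$; the inequality is therefore naturally interpreted for $t > 0$, which is already enough. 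Apart from this minor point, each case is a direct reduction to Theorem \ref{t2}, so I do not anticipate a serious obstacle.
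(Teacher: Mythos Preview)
Your proposal is correct and follows the route the paper signals: the corollary is stated immediately after the remark that $x\in X_0$ and SVEP force $x\in D(A)$, and is said to be ``deduced from Theorem~\ref{t2}'', which is exactly what you do for cases (1) and (2) via Proposition~\ref{p1}/Corollary~\ref{c1} and the characterization $K(A)=\{x:0\in\rho_A(x)\}$.

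For case (3) you take a slightly different path than the one most naturally suggested by the paper: rather than reducing to Theorem~\ref{t2}, you use $r_{T(t)}(x)=e^{t\omega_0(x)}$ to force $\omega_0(x)<0$ and conclude via exponential decay and Remark~3.1(3). This is perfectly valid (with the small caveat that one should pick $\omega$ with $\omega_0(x)<\omega<0$ rather than use $\omega_0(x)$ itself in the bound). An alternative in the spirit of ``deduce from Theorem~\ref{t2}'' is: $r_{T(t)}(x)<1$ means $1\notin\sigma_{T(t)}(x)$, and the inclusion $e^{t\sigma_A(x)}\subseteq\sigma_{T(t)}(x)$ established inside the proof of Theorem~\ref{t3} then forces $0\notin\sigma_A(x)$, i.e.\ $x\in K(A)$, so Theorem~\ref{t2}(2) applies. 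Both routes are short; yours has the advantage of not needing to reopen the proof of Theorem~\ref{t3}. Your observation about $t=0$ is well taken and is an issue with the statement rather than with your argument.
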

\vspace{1mm}

  \begin{coro}\label{c3}
 	Let $\{T(t)\}_{t\geqslant 0}$ be a $C_0$-semigroup  of bounded linear operators on a   Banach space ${X}$ with $A$ be their infinitesimal generator. Suppose that  $A$ has the SVEP and $T(t)$ is a local mean ergodic at $x\in X$.  
If $0$ is a pole of the local resolvent function $\hat{x}_A$ of order $n$, then $n\leq 1$. 
 \end{coro}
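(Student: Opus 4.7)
The plan is to argue by contradiction: assume $n\geq 2$ and derive a contradiction with the convergence of $C(t)x$, using the decomposition from Lemma \ref{lem} together with an Abel--Ces\`aro correspondence.

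First, I apply Lemma \ref{lem} at $\alpha=0$ to write $x=y+z$ with $y\in N(A^n)\setminus N(A^{n-1})$ (so $A^{n-1}y\neq 0$) and $\sigma_A(z)=\sigma_A(x)\setminus\{0\}$; the latter says $0\in\rho_A(z)$, so $\hat{z}_A$ is analytic at $0$. Because $y\in D(A^n)$ with $A^n y=0$, direct verification shows that $u(t):=\sum_{k=0}^{n-1}(t^k/k!)A^k y$ solves $u'=Au$ with $u(0)=y$, so uniqueness for the abstract Cauchy problem gives $T(t)y=u(t)$, and hence
\[
C(t)y=\sum_{k=0}^{n-1}\frac{t^k}{(k+1)!}A^k y,
\]
a polynomial of exact degree $n-1\geq 1$ in $t$ with nonzero leading coefficient $A^{n-1}y/n!$.

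Since $C(t)x$ is convergent while $\|C(t)y\|$ grows like $t^{n-1}$, the identity $C(t)z=C(t)x-C(t)y$ forces $C(t)z\sim -t^{n-1}A^{n-1}y/n!$ as $t\to\infty$. A standard Abel--Ces\`aro computation (integration by parts on $g(\lambda):=\int_0^\infty e^{-\lambda s}T(s)z\,ds$ together with the evaluation $\int_0^\infty e^{-\lambda s}s^n\,ds=n!/\lambda^{n+1}$) then converts this Ces\`aro asymptotic into
\[
\lambda g(\lambda)\sim -\frac{A^{n-1}y}{\lambda^{\,n-1}}\quad\text{as }\lambda\to 0^+,
\]
which blows up in norm when $n\geq 2$. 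On the other hand, $g=\hat{z}_A$ holds on $\{\mathrm{Re}(\lambda)>\omega_0\}$ and, by analytic continuation, on a right-sided neighborhood of $0$; since $\hat{z}_A$ is analytic at $0$, one has $\lambda\hat{z}_A(\lambda)\to 0$, which contradicts the above divergence and thereby forces $n\leq 1$.

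The only delicate step is this last analytic-continuation identification $g=\hat{z}_A$ for small $\lambda>0$. It is immediate when the semigroup growth bound satisfies $\omega_0\leq 0$, since then $g(\lambda)=R(\lambda,A)z=\hat{z}_A(\lambda)$ directly on all of $(0,\infty)$; in general, it requires a path inside $\rho_A(z)\cap\{\mathrm{Re}(\lambda)>0\}$ joining $\{\mathrm{Re}(\lambda)>\omega_0\}$ to a small right half-disk around $0$, which exists because $\rho_A(z)$ contains both $\{\mathrm{Re}(\lambda)>\omega_0\}$ and an open neighborhood of $0$, allowing the identity theorem to be applied.
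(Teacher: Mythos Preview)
The paper gives no proof of this corollary—it is simply listed after Theorem \ref{t2} without argument—so there is nothing to compare against. Your overall strategy (decompose via Lemma \ref{lem}, compute $T(t)y$ and $C(t)y$ explicitly as polynomials, then contradict the analyticity of $\hat z_A$ at $0$) is sound, and the first half is carried out correctly.

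The gap is in the Abel step. Your $g(\lambda)=\int_0^\infty e^{-\lambda s}T(s)z\,ds$ is only defined where this integral converges, and nothing in the hypotheses forces the growth bound of $t\mapsto T(t)z$ to be $\le 0$; for small $\lambda>0$ the integral may simply diverge, and then the Ces\`aro--Abel asymptotic you invoke has no meaning. The path argument does not rescue this: first, the bare fact that $\rho_A(z)$ contains both $\{\mathrm{Re}\,\lambda>\omega_0\}$ and a disk about $0$ does not imply these lie in the same component of $\rho_A(z)$; second, even granting a path, analytic continuation along it produces $\hat z_A$, not the (possibly divergent) integral, whereas the blow-up you need comes precisely from the integral representation. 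The clean repair is to work one integration down: set $F(t)=\int_0^t T(s)z\,ds$ and $h(\lambda)=\lambda\int_0^\infty e^{-\lambda t}F(t)\,dt$. Since $F(t)=tC(t)x-tC(t)y=O(t^n)$, this converges and is analytic on all of $\{\mathrm{Re}\,\lambda>0\}$. Using $F(t)\in D(A)$ with $AF(t)=T(t)z-z$ (equation (\ref{equ1})), one more iterated integral, and the closedness of $A$, a direct computation gives $(\lambda-A)h(\lambda)=z$ on the whole right half-plane; SVEP then forces $h=\hat z_A$ there. Now the Laplace asymptotic is legitimate for $h$ and yields $\hat z_A(\lambda)\sim -A^{n-1}y/\lambda^{n}$ as $\lambda\to 0^+$, contradicting analyticity at $0$ when $n\ge 2$.
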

 
 \vspace{1mm}

 \begin{coro}
	~  Let $\{T(t)\}_{t\geqslant 0}$ be a  $C_0$-semigroup  of bounded linear operators  on  a  Banach space ${X}$. Let $A$ be the infinitesimal generator of $T(t)$ and $R(., A)$ be their resolvent function. Assume that $\lim\limits_{t\to \infty} {\|T(t)x\|}/{t} =0$ for all $x\in X$. If one of the following assertions holds:
	
	$(i)$ \, $0\in \rho(A)$, or 
	
	$(ii)$\, $0$ is a pole of $R(\lambda, A)$.\\
	Then $T(t)$ is mean ergodic. 
\end{coro}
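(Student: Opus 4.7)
My plan is to apply Theorem \ref{t2} at every $x \in X$. The hypothesis $\|T(t)x\|/t \to 0$ for all $x$ is precisely the statement $X_0 = X$, so each $x$ automatically satisfies the standing assumption of Theorem \ref{t2}, and by the remark following the definition of local mean ergodicity the task reduces to establishing local mean ergodicity of $T(t)$ at every $x \in X$.

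For case $(i)$, I would observe that $0 \in \rho(A)$ means $R(\cdot,A)$ is analytic in a neighborhood of $0$, so the $D(A)$-valued map $\lambda \mapsto -R(\lambda,A)x$ supplies the analytic extension witnessing $0 \in \rho_A(x)$ (via $(A-\lambda I)(-R(\lambda,A)x) = x$). Hence $x \in K(A)$ for every $x$, and Theorem \ref{t2}(2) immediately gives local mean ergodicity at each $x$.

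For case $(ii)$, the central step is to argue that the pole of $R(\lambda,A)$ at $0$ must in fact be simple. I would introduce the Riesz spectral projection $P := \frac{1}{2\pi i}\oint_\Gamma R(\lambda,A)\,d\lambda$ associated with the isolated spectral point $0$, producing the $T(t)$-invariant decomposition $X = PX \oplus (I-P)X$ in which $A|_{PX}$ is bounded and nilpotent with nilpotency index equal to the pole order $n$, while $A|_{(I-P)X}$ is invertible. Assuming $n \geq 2$, I can pick $v \in PX$ with $A^{n-1}v \neq 0$; then the finite sum $T(t)v = \sum_{k=0}^{n-1}\frac{t^k}{k!}A^kv$ has leading term of order $t^{n-1}$, so $\|T(t)v\|/t$ fails to tend to $0$, contradicting the hypothesis. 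Therefore $n = 1$, which forces $A|_{PX} \equiv 0$, whence $PX = N(A)$ and $(I-P)X = R(A)$, giving the decomposition $X = N(A) \oplus R(A)$. Every $x \in X$ then lies in $N(A) \oplus R(A)$, and Theorem \ref{t2}(1) delivers local mean ergodicity at $x$.

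The main obstacle is precisely this reduction to a simple pole in case $(ii)$: one must couple the polynomial growth of $T(t)$ on the generalized eigenspace $PX$ against the sublinear growth hypothesis $\|T(t)x\|/t \to 0$ to exclude higher-order poles. Once that reduction is secured, the conclusion in both cases is a direct invocation of Theorem \ref{t2}.
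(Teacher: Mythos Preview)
Your proof is correct and follows exactly the route the paper indicates: the corollary is stated without an explicit proof, merely as a consequence ``deduced from Theorem~\ref{t2}'', and your argument reduces both cases to that theorem (case~(i) to part~(2), case~(ii) to part~(1)). The Riesz-projection step forcing the pole in case~(ii) to be simple---via the polynomial growth of $T(t)$ on $PX$ contradicting $\|T(t)v\|/t\to 0$---is the one substantive detail the paper leaves implicit, and you handle it correctly.
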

\vspace{1mm}

Next, we finish with the following general remark of this paper.
\begin{rem}\begin{enumerate}
		\item As mentioned above, a $C_0$-semigroup $\{T(t)\}_{t\geqslant 0}$ generated by $A$, such that $\lim\limits_{t\to \infty}{\|T(t)x\|}/{t} = 0$ for all $x\in X$ and $A$ has the SVEP, then $A$ is a quasi-nilpotent operator. Moreover, we can check that the Theorem \ref{thm2} is satisfied even if  $T(t)$ is mean ergodic.  
		\item \,  Let $A$ be the infinitesimal generator of the $C_0$-semigroup $\{T(t)\}_{t\geqslant 0} \, \subset  \mathcal{B}(X)$. \, If $A$ has the SVEP and $R(A)\cap H_0(A)=\{0\}$, then  $N(A)=H_0(A)$. In this case, we have $x\in R(A)$ if and only if $0\in\rho(A)$. Moreover, if $T(t)$ satisfies  $\lim\limits_{t\to \infty}{\|T(t)x\|}/{t}= 0$ for some  $x\in X$, then $T(t)$ is local mean ergodic at $x$. \\\\
		
	\end{enumerate}
\end{rem}



\begin{thebibliography}{99}	
 \bibitem{PA} \textsc{ P. Aiena, C.Trapani, S. Triolo, } \emph{ SVEP and local specral radius formula for unbounded operators, } Filomat 28:2 (2014), 263-273.
 
  \bibitem{PAiena} \textsc{ P. Aiena, } \emph{  Fredholm and Local Spectral Theory, With Applications to Multipliers, } Springer 2004.
 
  
 	\bibitem{Bur} \textsc{T. Berm{\'u}dez, M. Gonz{\'a}lez and A. Martin{\'o}n,} \emph{ On the Poles of the Local Resolvent,} Math. Nachr., 193 (1998), 19-26. 
 		\bibitem{E}
 	\textsc{I. Erdelyi and W. Shengwang,} \emph{ A Local Spectral Theory for Closed Operators,}
 	Cambridge University Press (London-New York-New Rochelle-Melbourne-Sydney), 1985.
		 \bibitem{HP}    \textsc{ E. Hille and R. S. Phillips,} \emph{ Functional analysis and semigroups,}  Amer. Math. Soc. Colloq. Publ., vol.
		  31, Amer. Math. Soc., Providence, R. I., 1957.
		  
\bibitem{Kido}    \textsc{K. Kido and W. Takahashi,} \emph{ Mean ergodic theorems for semigroups of linear operators,}
I. Math. Anal. Appl. 103 (1984), 387-394.
	\bibitem{KSZ}
\textsc{Y. Kozitsky, D. Shoikhet and J. Zem{\'a}nek,} \emph{ Power convergence of Abel averages,} Arch. Math. (Basel), 100 (2013), 539-549.
		\bibitem{Kr85}    \textsc{ U. Krengel,} \emph{ Ergodic Theorems,} Walter de Gruyter Studies in Mathematics 6, Walter de Gruyter, Berlin-New York, 1985.
		\bibitem{L2}    \textsc{ M. Lin,} \emph{ On the uniform ergodic theorem II,} Proc. Amer. Math. Soc., 46 (1974), 217-225.
			\bibitem{Su}\textsc{ M. Lin, D. Shoikhet and L. Suciu,} \emph{ Remarks on uniform ergodic theorems,}  Acta Sci. Math. (Szeged), 81 (2015), 251-283.
          	\bibitem{Mas}
\textsc{P. Masani,} \emph{  Ergodic theorems for locally integrable semigroups of continuous linear operators on a Banach space,} Advances in Math. 21 (1976), 202-228.

	\bibitem{Mb1}
\textsc{M. Mbekhta,} \emph{  G{\'e}n{\'e}ralisation de la d{\'e}composition de Kato aux op{\'e}rateurs paranormaux at spectraux,}
Glasgow Mathematical Journal 29 (1987), 159-75.
 	\bibitem{Mb}
 \textsc{M. Mbekhta,} \emph{  Th{\'e}orie spectrale locale et limite de nilpotents,} Proc. Amer. Math. Soc. 110 (1990) 621-631.
          \bibitem{Mul}    \textsc{V. M\"{u}ller,} \emph{Spectral Theory of Linear Operators and Spectral Systems in
	Banach Algebras} 2nd edition. Oper. Theory Advances and
Applications, vol. 139, 2007.

		\bibitem{Pa}    \textsc{ A. Pazy,} \emph{ Semigroups of Linear Operators and Applications to Partial Differential
			Equations},  Applied Mathematical Sciences, vol. 44, Springer-Verlag,  New York 1983.
		
			\bibitem{Sato} \textsc{R. Sato,} \emph{ On a mean ergodic theorem, } Proc. Amer. Math. Soc. 83 (1981), 563-564.
		
		\bibitem{Sh86}    \textsc{ S.Y. Shaw,} \emph{ Uniform ergodic theorems for locally integrable semigroups and pseudo-resolvents,}  Proc. Amer. Math. Soc., 98 (1986), 61-67.
		\bibitem{Sh-89}    \textsc{S.Y. Shaw,} \emph{ Mean ergodic theorems and linear functional equations,}   J. Functional Anal. 87 (1989), 428-441. 

 	  	\bibitem{taj}    \textsc{A. Tajmouati, M. Karmouni and F. Barki,} \emph{ Abel ergodic theorem for $C_0$-semigroups,} Adv. Oper. Theory, Vol. 5, No 4,  (2020), 1468-1479. 
 			 
\end{thebibliography}
\end{document}